\numberwithin{equation}{section}
\newtheorem{theorem}{Theorem}
\newtheorem{lemma}[theorem]{Lemma}
\newtheorem{corollary}[theorem]{Corollary}
\theoremstyle{definition}
\newtheorem{remark}[theorem]{Remark}
\renewcommand{\leq}{\leqslant}
\renewcommand{\geq}{\geqslant}
\newcommand{\R}{\mathbb{R}}
\newcommand{\N}{\mathbb{N}}
\newcommand{\Z}{\mathbb{Z}}
\newcommand{\T}{\mathbb{T}}
\begin{document}

\title{Inequalities in Fourier analysis on binary cubes}

\author[T. Crmari\'{c}]{Ton\'{c}i Crmari\'{c}}
\address{T. C., Department of Mathematics, Faculty of Science, University of Split, Ru\dj{}era Bo\v{s}kovi\'{c}a 33, 21000 Split, Croatia}
\email{tcrmaric@pmfst.hr}

\author[V. Kova\v{c}]{Vjekoslav Kova\v{c}}
\address{V. K., Department of Mathematics, Faculty of Science, University of Zagreb, Bijeni\v{c}ka cesta 30, 10000 Zagreb, Croatia}
\email{vjekovac@math.hr}

\author[S. Shiraki]{Shobu Shiraki}
\address{S. S., Department of Mathematics, Faculty of Science, University of Zagreb, Bijeni\v{c}ka cesta 30, 10000 Zagreb, Croatia}
\email{shobu.shiraki@math.hr}

\subjclass[2020]{Primary 42A05; %Harmonic analysis in one variable - Trigonometric polynomials, inequalities, extremal problems
Secondary 05D05, %Extremal combinatorics - Extremal set theory
94A17, %Communication, information - Measures of information, entropy
42B05} %Harmonic analysis in several variables - Fourier series and coefficients in several variables

\keywords{Sharp estimate, Fourier transform, Fourier restriction, Additive energy, Entropy}

\begin{abstract}
This paper studies two classical inequalities, namely the Hausdorff--Young inequality and equal-exponent Young's convolution inequality, for discrete functions supported in the binary cube $\{0,1\}^d\subset\mathbb{Z}^d$. We characterize the exact ranges of Lebesgue exponents in which sharp versions of these two inequalities hold, and present several immediate consequences. First, if the functions are specialized to be the indicator of some set $A\subseteq\{0,1\}^d$, then we obtain sharp upper bounds on two types of generalized additive energies of $A$, extending the works of Kane--Tao, de Dios Pont--Greenfeld--Ivanisvili--Madrid, and  one of the present authors. Second, we obtain a sharp binary variant of the Beckner--Hirschman entropic uncertainty principle, as well as a sharp lower estimate on the entropy of a sum of two independent random variables with values in $\{0,1\}^d$. Finally, the sharp binary Hausdorff--Young inequality also reveals the exact range of dimension-free estimates for the Fourier restriction to the binary cube.
\end{abstract}

\maketitle

% \tableofcontents

%%%%%%%%%%%%%%%%%%%%%%%%%%%%%%%%%%%%%%%%%%%%%%%%

\section{Introduction}
Sharp constants in the most classical inequalities in Fourier analysis, namely the Hausdorff--Young inequality and Young’s convolution inequality, are known since William Beckner's seminal paper \emph{Inequalities in Fourier analysis} \cite{Bec75}, which was published precisely $50$ years ago.
Our intention is to pay homage to \cite{Bec75} by studying sharp dimension-free variants of these two inequalities for functions supported in the binary cubes $\{0,1\}^d$. 
In this setting the question is no longer about the sharp constants, which end up being simply $1$, but on the optimal (enlarged) ranges of exponents for which the estimates hold. These ranges will be characterized as quite unusual curved regions depicted in Figures \ref{fig:dimension_free} and \ref{fig:Young} below.
The approach also shares some formal similarity with original Beckner's method: the results are reduced to certain two-point and four-point inequalities (see Lemmata \ref{lm:twopoint} and \ref{lm:twopointYoung} below) and the main work is spent on their technical proofs.

\subsection{The Hausdorff--Young inequality}
Let $d$ be a positive integer.
The Fourier transform
\[ \widehat{f}\colon\T^d \to\mathbb{C}, \quad \widehat{f}(\xi) := \sum_{x\in\Z^d} f(x) e^{-2\pi i x\cdot \xi} \]
of a discrete function $f\in\ell^1(\Z^d)$ satisfies the well-known Hausdorff--Young inequality
\begin{equation}\label{eq:ordinaryHY} 
\|\widehat{f}\|_{\textup{L}^q(\T^d)} \leq \|f\|_{\ell^p(\Z^d)}
\end{equation}
if and only if the exponents $p,q\in[1,\infty]$ satisfy
\[ p\leq 2 \quad\text{and}\quad \frac{1}{p} + \frac{1}{q} \geq 1; \]
see the left half of Figure \ref{fig:dimension_free}.
This range of Lebesgue exponents can be written in a somewhat unusual way,
\begin{equation}\label{eq:ordinaryrange}
\frac{1}{p} \geq 
\begin{cases} 1 - \displaystyle\frac{1}{q} & \text{for } q\in[2,\infty], \\
\displaystyle\frac{1}{2} & \text{for } q\in[1,2),
\end{cases}
\end{equation}
in order to be consistent with the later text.
On the other hand, the Fourier transform of $g\in\textup{L}^1(\T^d)$ is defined as
\[ \widehat{g}\colon\Z^d \to\mathbb{C}, \quad \widehat{g}(x) := \int_{\T^d} g(\xi) e^{-2\pi i x\cdot \xi} \,\textup{d}\xi. \]
The dual estimate to \eqref{eq:ordinaryHY} is
\begin{equation}\label{eq:ordinaryHY2} 
\|\widehat{g}\|_{\ell^{p'}(\Z^d)} \leq \|g\|_{\textup{L}^{q'}(\T^d)}, 
\end{equation}
so it holds in the exact same range of exponents \eqref{eq:ordinaryrange}.
Here and in what follows $p'$ and $q'$ respectively denote the conjugate exponents to $p$ and $q$, i.e., $1/p+1/p'=1$, $1/q+1/q'=1$.

\begin{figure}
\includegraphics[width=0.4\linewidth]{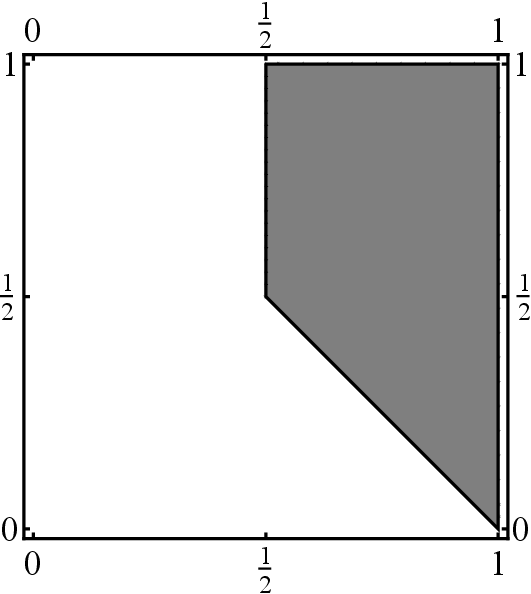}\hspace*{1cm}
\includegraphics[width=0.4\linewidth]{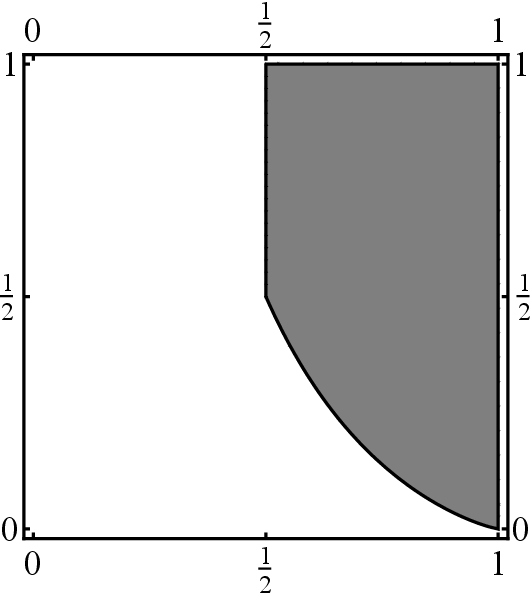}
\caption{Regions determined by points $(1/p,1/q)\in[0,1]^2$ from the exponent ranges \eqref{eq:ordinaryrange} (left) and \eqref{eq:improvedrange} (right).}
\label{fig:dimension_free}
\end{figure}

An interesting open-ended question is whether the above range of $(p,q)$ for the estimate \eqref{eq:ordinaryHY} can be improved under an additional assumption that $f$ is supported in a particular subset of $\Z^d$. Apparently the simplest and the most studied case (as will be discussed a bit later) seems to be the one of the binary cube $\{0,1\}^d$ sitting inside the lattice group $\Z^d$.
Dually, one can study inequality \eqref{eq:ordinaryHY2} when, after computing the Fourier transform of $g$, the function $\widehat{g}$ is additionally restricted to the binary cube $\{0,1\}^d$.
We are able to fully characterize the validity of the corresponding estimates.

For real numbers $a,b$ such that $\min\{a,b,a-b\}>-1$ we define the generalized binomial coefficient $\binom{a}{b}$ as
\[ \binom{a}{b} := \frac{\Gamma(a+1)}{\Gamma(b+1)\Gamma(a-b+1)}, \]
where $\Gamma$ denotes the usual gamma function.

\begin{theorem}\label{thm:main}
The following are equivalent for $p,q\in[1,\infty]$.
\begin{enumerate}
\item \label{thmit1} The estimate
\begin{equation}\label{eq:improvedHY} 
\|\widehat{f}\|_{\textup{L}^q(\T^d)} \leq \|f\|_{\ell^p(\{0,1\}^d)}
\end{equation}
holds for every $d\in\N$ and every $f\colon\Z^d\to\mathbb{C}$ supported in the binary cube $\{0,1\}^d$.
\item \label{thmit2} The estimate
\begin{equation}\label{eq:improvedHY2} 
\big\|\widehat{g}\big|_{\{0,1\}^d}\big\|_{\ell^{p'}(\{0,1\}^d)} \leq \|g\|_{\textup{L}^{q'}(\T^d)}
\end{equation}
holds for every $d\in\N$ and every $g\in\textup{L}^{q'}(\T^d)$.
\item \label{thmit3} The Lebesgue exponents $p,q$ satisfy
\begin{equation}\label{eq:improvedrange}
\frac{1}{p} \geq 
\begin{cases} 1 & \text{for } q=\infty, \\
\displaystyle\frac{1}{q}\log_2\binom{q}{q/2} & \text{for } q\in[2,\infty), \\[3mm]
\displaystyle\frac{1}{2} & \text{for } q\in[1,2).
\end{cases}
\end{equation}
\end{enumerate}
\end{theorem}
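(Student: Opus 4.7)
My plan is to establish $(\ref{thmit1})\Leftrightarrow(\ref{thmit2})$ by duality, $(\ref{thmit1})\Rightarrow(\ref{thmit3})$ by testing on explicit functions, and $(\ref{thmit3})\Rightarrow(\ref{thmit1})$ by a tensorial induction on $d$ anchored at the one-dimensional Lemma~\ref{lm:twopoint}. The first equivalence is routine: \eqref{eq:improvedHY} and \eqref{eq:improvedHY2} say respectively that the Fourier transform $\ell^p(\{0,1\}^d)\to\textup{L}^q(\T^d)$ and its adjoint have operator norm at most $1$, and these two norms agree.

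For the necessity $(\ref{thmit1})\Rightarrow(\ref{thmit3})$, the main test function is $f = \mathbf{1}_{\{0,1\}^d}$. Its Fourier transform factorizes as $\widehat{f}(\xi) = \prod_{j=1}^d(1+e^{-2\pi i\xi_j})$, and using $|1+e^{-2\pi i\xi}| = 2|\cos\pi\xi|$ together with a Beta-integral and Legendre's duplication formula one identifies $\int_{\T}|1+e^{-2\pi i\xi}|^q\,d\xi$ with the generalized central binomial coefficient $\binom{q}{q/2}$. Thus $\|\widehat{f}\|_q = \binom{q}{q/2}^{d/q}$ and $\|f\|_p = 2^{d/p}$, which forces $1/p\geq(1/q)\log_2\binom{q}{q/2}$ in the regime $q\in[2,\infty)$. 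For $q\in[1,2)$ this test is too weak, so instead I would take $f$ to be a Rademacher-type $\pm 1$ function on $\{0,1\}^d$: applying Khintchine's inequality pointwise in $\xi$ gives $\mathbb{E}\|\widehat{f}\|_q^q \geq c_q 2^{dq/2}$, so some choice of signs yields $\|\widehat{f}\|_q\geq c_q^{1/q}2^{d/2}$, and letting $d\to\infty$ in \eqref{eq:improvedHY} rules out $1/p<1/2$. The corner $q=\infty$ is immediate from the same test $f=\mathbf{1}_{\{0,1\}^d}$.

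For the sufficiency $(\ref{thmit3})\Rightarrow(\ref{thmit1})$ there are three regimes. If $q\in[1,2)$ and $p\leq 2$, the chain
\[
\|\widehat{f}\|_{\textup{L}^q(\T^d)} \leq \|\widehat{f}\|_{\textup{L}^2(\T^d)} = \|f\|_{\ell^2} \leq \|f\|_{\ell^p}
\]
settles matters, using $|\T^d|=1$, Plancherel, and monotonicity of counting $\ell^p$-norms on the finite set $\{0,1\}^d$. The corner $(q,p)=(\infty,1)$ is trivial. In the main regime $q\in[2,\infty)$ it suffices to treat the boundary curve $1/p = (1/q)\log_2\binom{q}{q/2}$, since shrinking $p$ only enlarges $\|f\|_{\ell^p}$. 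I would induct on $d$: the base case $d=1$ is exactly Lemma~\ref{lm:twopoint}. For $d\geq 2$, split $x=(x',x_d)$, $\xi=(\xi',\xi_d)$, and set $g_j(\xi') := \widehat{f(\cdot,j)}(\xi')$ for $j\in\{0,1\}$, so that $\widehat{f}(\xi',\xi_d) = g_0(\xi')+g_1(\xi')e^{-2\pi i\xi_d}$. Applying Lemma~\ref{lm:twopoint} pointwise in $\xi'$ gives
\[
\int_{\T}|\widehat{f}(\xi',\xi_d)|^q\,d\xi_d \leq \bigl(|g_0(\xi')|^p + |g_1(\xi')|^p\bigr)^{q/p}.
\]
Since $\binom{q}{q/2}\geq 2$ for $q\geq 2$, the boundary condition forces $q\geq p$, so Minkowski's integral inequality in $\textup{L}^{q/p}(\T^{d-1})$ then yields
\[
\|\widehat{f}\|_{\textup{L}^q(\T^d)}^p \leq \|g_0\|_{\textup{L}^q(\T^{d-1})}^p + \|g_1\|_{\textup{L}^q(\T^{d-1})}^p,
\]
and the inductive hypothesis $\|g_j\|_{\textup{L}^q}\leq \|f(\cdot,j)\|_{\ell^p}$ closes the recursion upon summing the $p$-th powers.

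The main obstacle is the sharp two-point inequality Lemma~\ref{lm:twopoint} itself, the technical heart of the paper, which I take as given. Once it is in hand, the higher-dimensional theorem unfolds cleanly by the tensorization-plus-Minkowski induction above; the curve \eqref{eq:improvedrange} is engineered exactly so that $f = \mathbf{1}_{\{0,1\}}$ is extremal in the base case while the induction-step condition $q\geq p$ is simultaneously preserved.
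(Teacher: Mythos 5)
Your proposal is correct and follows essentially the same route as the paper: duality for the equivalence of \eqref{thmit1} and \eqref{thmit2}, testing on $\mathbbm{1}_{\{0,1\}^d}$ to force the binomial condition for $q\in[2,\infty)$, and an induction on $d$ anchored at Lemma~\ref{lm:twopoint} with Minkowski's integral inequality (valid since the boundary curve gives $p\leq q$) for the sufficiency. The one place you genuinely diverge is the necessity of $p\leq 2$ for $q\in[1,2)$: you use a random $\pm1$ function and the lower Khintchine inequality, letting $d\to\infty$, whereas the paper stays in dimension $d=1$ and tests $f(0)=1$, $f(1)=\varepsilon$, comparing the second-order expansion $1+\tfrac{q}{4}\varepsilon^2$ of the left-hand side against $1+\tfrac{1}{p}\varepsilon^p$ as $\varepsilon\to0^+$. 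Both work; your randomization argument is a standard and clean alternative, while the paper's perturbative test is more elementary and self-contained. One small point worth spelling out in the induction step: Lemma~\ref{lm:twopoint} is stated for nonnegative $\alpha,\beta$, so to apply it pointwise to the complex values $g_0(\xi')$, $g_1(\xi')$ you should note, as the paper does, that $\int_0^1|a+be^{-2\pi i t}|^q\,\textup{d}t$ depends only on $|a|$ and $|b|$ by rotation and translation invariance of the inner integral.
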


The optimal range \eqref{eq:improvedrange} from Theorem \ref{thm:main} is depicted in the right half of Figure \ref{fig:dimension_free}.
Conditions \eqref{thmit1} and \eqref{thmit2} are clearly equivalent by duality, while substantial work is required to show that they are also equivalent to Condition \eqref{thmit3}.

The particular case $q=4$ of Theorem \ref{thm:main} is an easy reformulation of the result by Kane and Tao \cite[Theorem 7 \& Remark 9]{KT17}. When $q$ is an even integer, the problem was studied by de Dios Pont, Greenfeld, Ivanisvili, and Madrid \cite[Theorem 3, \S 4.1 \& \S 4.2]{DGIM21}. The actual verification of \eqref{eq:improvedHY} in the optimal range of exponents $p$ for a given fixed $q=2k$, $k\in\N$, was done in \cite{DGIM21}, but only for $k\leq 10$, with a remark that it extends to $k\leq 100$ with an assistance of a computer. The cases $q=2k$ for all positive integers $k$ were subsequently completed by one of the present authors \cite[Corollary 2]{Kov23}. In the aforementioned papers the emphasis is often put on nonnegative functions $f$, or even just indicator functions $f=\mathbbm{1}_A$ of sets $A\subseteq\{0,1\}^d$, since the problem was motivated from additive combinatorics.

Speaking of combinatorial applications, for every real number $\kappa\in[1,\infty)$ we define a generalized additive energy $E_\kappa$ of a finite set $A\subset\Z^d$ as
\[ E_{\kappa}(A) := \|\widehat{\mathbbm{1}_A}\|_{\textup{L}^{2\kappa}(\T^d)}^{2\kappa} = \int_{\T^d} \Big|\sum_{x\in A} e^{-2\pi i x\cdot\xi}\Big|^{2\kappa} \,\textup{d}\xi. \]
The case $\kappa=1$ leads to merely just $E_1(A)=|A|$, i.e., the size of $A$, while the case $\kappa=2$ simplifies to the concept of the usual additive energy \cite{TV06}.
In the case of a general positive integer $\kappa$ the definition of $E_\kappa$ can be rewritten as
\begin{align*} 
E_{\kappa}(A) & = \|\underbrace{\mathbbm{1}_A\ast\mathbbm{1}_A\ast\cdots\ast\mathbbm{1}_A}_{\kappa}\|_{\ell^2(\Z^d)}^2 \\
& = \big| \{ (a_1,a_2,\ldots,a_{2\kappa-1},a_{2\kappa})\in A^{2\kappa} \,:\, a_1+\cdots+a_\kappa = a_{\kappa+1}+\cdots+a_{2\kappa} \} \big|
\end{align*}
and it was systematically studied in \cite{SS13,Shk14}.
The idea to investigate $E_\kappa$ for non-integer values of $\kappa$ has in fact already been hinted on in \cite[Lemma 16 \& Remark 17]{DGIM21}. 
An immediate consequence of our theorem is a sharp bound for the $E_\kappa$ energy of $A$ in terms of its size, for subsets $A$ of the binary cubes.

\begin{corollary}\label{cor:additive}
For every $\kappa\in[1,\infty)$, $d\in\N$, and $A\subseteq\{0,1\}^d$ one has
\begin{equation}\label{eq:additiver}
E_{\kappa}(A) \leq |A|^{r},
\end{equation}
where
\begin{equation}\label{eq:defofr}
r = r(\kappa) := \log_2\binom{2\kappa}{\kappa}.
\end{equation}
Besides trivial cases $|A|\leq 1$, the equality is also attained for some sets with $|A|>1$, e.g., if $A=\{0,1\}^d$.
\end{corollary}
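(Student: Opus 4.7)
The strategy is to invoke Theorem \ref{thm:main} with the specific choice $f=\mathbbm{1}_A$ and $q=2\kappa\ge 2$. The exponent range \eqref{eq:improvedrange} permits the boundary choice $\frac{1}{p}=\frac{1}{2\kappa}\log_2\binom{2\kappa}{\kappa}=\frac{r}{2\kappa}$, so \eqref{eq:improvedHY} yields
\[
\|\widehat{\mathbbm{1}_A}\|_{\textup{L}^{2\kappa}(\T^d)} \leq \|\mathbbm{1}_A\|_{\ell^p(\{0,1\}^d)} = |A|^{1/p}.
\]
Raising to the $2\kappa$-th power turns the left-hand side into $E_\kappa(A)$ by definition, and turns the right-hand side into $|A|^{2\kappa/p}=|A|^{r}$, which is exactly \eqref{eq:additiver}. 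The trivial cases $|A|\in\{0,1\}$ reduce to easy identities: $E_\kappa(\emptyset)=0$ and $E_\kappa(\{a\})=1$.

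For the equality claim at $A=\{0,1\}^d$, I would compute the Fourier transform directly. Since the indicator of the full cube tensorizes,
\[
\widehat{\mathbbm{1}_{\{0,1\}^d}}(\xi)=\prod_{j=1}^{d}\bigl(1+e^{-2\pi i\xi_j}\bigr), \qquad \bigl|\widehat{\mathbbm{1}_{\{0,1\}^d}}(\xi)\bigr|^{2\kappa}=\prod_{j=1}^{d}\bigl(2|\cos\pi\xi_j|\bigr)^{2\kappa}.
\]
By Fubini this reduces the matter to the one-dimensional integral $\int_0^1(2|\cos\pi\xi|)^{2\kappa}\,\textup{d}\xi$, which after the substitution $u=\pi\xi$ and an application of the beta-function identity $\int_0^{\pi/2}\cos^{2\kappa}u\,\textup{d}u=\frac{\sqrt\pi\,\Gamma(\kappa+1/2)}{2\Gamma(\kappa+1)}$ together with Legendre's duplication formula $\Gamma(2\kappa+1)=\tfrac{2^{2\kappa}}{\sqrt\pi}\,\Gamma(\kappa+\tfrac12)\Gamma(\kappa+1)$ evaluates to $\binom{2\kappa}{\kappa}$. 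Consequently $E_\kappa(\{0,1\}^d)=\binom{2\kappa}{\kappa}^d=(2^d)^{r}=|\{0,1\}^d|^{r}$, so equality holds.

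No substantial obstacle is anticipated: once Theorem \ref{thm:main} is available, \eqref{eq:additiver} is immediate, and the extremal example is a one-variable computation. The only conceptual point is that the exponent $r$ in \eqref{eq:defofr} is precisely calibrated so that the conjugate $p$ to the boundary exponent in \eqref{eq:improvedrange} yields $2\kappa/p=r$; this is consistent with the fact that $\mathbbm{1}_{\{0,1\}^d}$ saturates the binary Hausdorff--Young inequality for every admissible pair on the boundary curve.
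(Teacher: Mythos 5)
Your proposal is correct and follows essentially the same route as the paper: apply the boundary case of Theorem \ref{thm:main} with $f=\mathbbm{1}_A$, $q=2\kappa$, and $2\kappa/p=r$, then verify equality for $A=\{0,1\}^d$ by tensorization and the one-dimensional cosine integral evaluated via the duplication formula (the same computation the paper performs in its necessity argument and then recalls here).
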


The last remark in Corollary \ref{cor:additive} makes the exponent \eqref{eq:defofr} optimal. 
Particular case $\kappa=2$ of \eqref{eq:additiver} is precisely \cite[Theorem 7]{KT17}, while all integer values of $\kappa$ were jointly covered by \cite[Theorem 3]{DGIM21} and \cite[Corollary 2]{Kov23}.
Even though here we are able to handle all real $\kappa\geq1$, the present proof of \eqref{eq:additiver} is arguably also the most elegant one, even if still quite nontrivial and partially computer assisted.

Additional motivation for estimate \eqref{eq:improvedHY2} comes from its interpretation as a discrete di\-men\-sion-free Fourier restriction estimate, somewhat in the spirit of the recent work of Oliveira e Silva and Wr\'{o}bel \cite{OW24} for the spheres in $\R^d$.

\begin{corollary}\label{cor:restriction}
Let $p,q\in[1,\infty]$.
There exists a constant $C_{p,q}\in(0,\infty)$ such that for every $d\in\N$ and every $g\in\textup{L}^{q'}(\T^d)$ one has
\begin{equation}\label{eq:improvedHY3} 
\big\|\widehat{g}\big|_{\{0,1\}^d}\big\|_{\ell^{p'}(\{0,1\}^d)} \leq C_{p,q} \|g\|_{\textup{L}^{q'}(\T^d)}
\end{equation}
if and only if $p$ and $q$ satisfy \eqref{eq:improvedrange}.
\end{corollary}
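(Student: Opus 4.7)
My plan is to derive Corollary \ref{cor:restriction} directly from Theorem \ref{thm:main} by a tensor-power argument. The sufficiency half is immediate: if $(p,q)$ satisfies \eqref{eq:improvedrange}, then Theorem \ref{thm:main}\eqref{thmit2} gives \eqref{eq:improvedHY3} with the constant $C_{p,q}=1$, so there is nothing to do beyond invoking the theorem.

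For the necessity half I would argue by contrapositive. Suppose $(p,q)$ does \emph{not} satisfy \eqref{eq:improvedrange}. By the equivalence \eqref{thmit2}$\Leftrightarrow$\eqref{thmit3} in Theorem \ref{thm:main}, there exist a dimension $d_\ast\in\N$ and a function $g_\ast\in\textup{L}^{q'}(\T^{d_\ast})$ for which estimate \eqref{eq:improvedHY2} fails, i.e., with
\[
\lambda := \frac{\big\|\widehat{g_\ast}\big|_{\{0,1\}^{d_\ast}}\big\|_{\ell^{p'}(\{0,1\}^{d_\ast})}}{\|g_\ast\|_{\textup{L}^{q'}(\T^{d_\ast})}} > 1.
\]
I then take tensor powers: for each $k\in\N$ set
\[
G_k(\xi_1,\ldots,\xi_k) := \prod_{j=1}^{k} g_\ast(\xi_j), \qquad \xi_j\in\T^{d_\ast},
\]
so that $G_k\in\textup{L}^{q'}(\T^{d_\ast k})$. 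Because the Fourier transform on a product torus is the tensor product of the one-factor Fourier transforms, $\widehat{G_k}(x_1,\ldots,x_k)=\prod_{j=1}^k \widehat{g_\ast}(x_j)$, and since the binary cube tensorizes as $\{0,1\}^{d_\ast k}=(\{0,1\}^{d_\ast})^k$, both norms in \eqref{eq:improvedHY3} multiply across factors. Thus the ratio of the two sides of \eqref{eq:improvedHY3} at $d=d_\ast k$ and $g=G_k$ equals $\lambda^k$, which diverges as $k\to\infty$, ruling out any finite $C_{p,q}$.

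Since Theorem \ref{thm:main} has already done the heavy lifting of identifying the optimal range, there is no substantial obstacle in this argument; the only point requiring a brief sanity check is the tensorization identity
\[
\big\|\widehat{G_k}\big|_{\{0,1\}^{d_\ast k}}\big\|_{\ell^{p'}} = \big\|\widehat{g_\ast}\big|_{\{0,1\}^{d_\ast}}\big\|_{\ell^{p'}}^{k}, \qquad \|G_k\|_{\textup{L}^{q'}} = \|g_\ast\|_{\textup{L}^{q'}}^{k},
\]
which follows from Fubini. Combining the two halves gives the stated equivalence.
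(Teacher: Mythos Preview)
Your proof is correct and follows essentially the same approach as the paper: both use Theorem~\ref{thm:main} together with the tensor-power trick. The only cosmetic difference is that the paper argues the necessity half directly (assuming a finite $C_{p,q}$, tensorizing forces $C_{p,q}=1$, and then Theorem~\ref{thm:main} yields \eqref{eq:improvedrange}), whereas you phrase it contrapositively by tensorizing a single bad example; the underlying idea is identical.
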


Namely, the usual tensor-power trick will show that \eqref{eq:improvedHY2} and \eqref{eq:improvedHY3} are equivalent, i.e., the constant $C_{p,q}$ is forced to be $1$ if it does not depend on the dimension $d$.

The main ingredient in the proof of Theorem \ref{thm:main} is the following ``two-point inequality.''

\begin{lemma}\label{lm:twopoint}
If $\alpha,\beta\in[0,\infty)$, $p\in(1,2)$, $q\in(2,\infty)$ satisfy \eqref{eq:improvedrange}, then
\begin{equation}\label{eq:twopointineq} 
\Big( \int_0^1 |\alpha e^{2\pi i t} + \beta|^q \,\textup{d}t \Big)^{1/q} \leq (\alpha^p + \beta^p)^{1/p}.
\end{equation}
\end{lemma}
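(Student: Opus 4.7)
The plan is to reduce \eqref{eq:twopointineq} to a one-variable analytic inequality and then to settle the latter via an explicit series expansion of the integral on the left. Both sides of \eqref{eq:twopointineq} are positively $1$-homogeneous in $(\alpha,\beta)$, and the change of variable $t\mapsto -t$ shows that the left-hand side is symmetric in $(\alpha,\beta)$; the right-hand side is manifestly symmetric. We may therefore normalize $\alpha=1$ and $\beta=s\in[0,1]$, reducing the task to
\[
\int_0^1 \bigl|1 + s e^{2\pi i t}\bigr|^q \,\textup{d}t \leq (1+s^p)^{q/p}.
\]

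Next, factor $|1+se^{2\pi it}|^q = (1+se^{2\pi it})^{q/2}(1+se^{-2\pi it})^{q/2}$, expand each factor by the generalized binomial series $(1+z)^{q/2}=\sum_{n\geq 0}\binom{q/2}{n}z^n$, and apply Parseval to the resulting Fourier series in $t$. This yields
\[
\int_0^1 \bigl|1 + s e^{2\pi i t}\bigr|^q \,\textup{d}t \;=\; \sum_{n=0}^{\infty}\binom{q/2}{n}^{2} s^{2n},
\]
initially for $s<1$ by absolute convergence, and then for $s=1$ by monotone convergence on the non-negative squared coefficients together with dominated convergence on the integral side. Specializing to $s=1$ recovers the generalized Chu--Vandermonde identity $\sum_n\binom{q/2}{n}^2=\binom{q}{q/2}$, which is exactly the relation that makes the critical exponent $p_*(q):=q/\log_2\binom{q}{q/2}$ produce equality in \eqref{eq:twopointineq} at $(\alpha,\beta)=(1,1)$.

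A direct logarithmic differentiation shows that, for each fixed $s\in(0,1)$, the map $p\mapsto(1+s^p)^{1/p}$ is strictly decreasing on $(0,\infty)$, so the right-hand side of \eqref{eq:twopointineq} is smallest at the largest admissible $p=p_*(q)$. It is therefore enough to prove
\[
\Phi(s) := (1+s^{p_*})^{q/p_*} - \sum_{n=0}^{\infty}\binom{q/2}{n}^{2} s^{2n} \;\geq\; 0, \qquad s\in[0,1],
\]
with $\Phi(0)=\Phi(1)=0$ built in by construction.

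The technical heart of the argument is the non-negativity of $\Phi$ on the interior of $[0,1]$. Near $s=0$, since $p_*<2$ for $q>2$, the leading term $(q/p_*)s^{p_*}$ from the left dominates the quadratic $(q/2)^2 s^2$ coming from the series, so $\Phi(s)>0$ in a right neighborhood of $0$; near $s=1$, the defining relation $2^{q/p_*}=\binom{q}{q/2}$ makes $s=1$ a tangency point, and a second-order Taylor expansion there secures $\Phi\geq 0$ in a left neighborhood of $1$. The compact interior range is the main obstacle. The cleanest analytic route would be to establish convexity of $u\mapsto\log\bigl(\sum_n\binom{q/2}{n}^2 e^{2nu}\bigr)$ in $u=\log s$, compare it with the convex function $(q/p_*)\log(1+e^{p_* u})$, and exploit their matching boundary behavior as $u\to 0$ and $u\to-\infty$. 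If this purely analytic comparison does not close uniformly in $q$, the argument would be completed---consistently with the paper's self-description as ``partially computer assisted''---by a rigorous numerical verification of $\Phi\geq 0$ on a compact sub-interval, propagated to the full gap via quantitative Lipschitz estimates on $\Phi$ obtained from termwise bounds on $\binom{q/2}{n}^2$.
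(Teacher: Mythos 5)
Your reduction to the one-variable inequality and the Parseval expansion $\int_0^1|1+se^{2\pi it}|^q\,\textup{d}t=\sum_{n\ge0}\binom{q/2}{n}^2s^{2n}$ are correct, as are the endpoint normalization $p=p_*(q)$, the tangency at $s=1$ via $\sum_n\binom{q/2}{n}^2=\binom{q}{q/2}$, and the local analysis near $s=0$ and $s=1$ (these match the paper's Lemma \ref{lm:perturbative}). But the proof has a genuine gap exactly where you locate ``the technical heart'': nonnegativity of $\Phi$ on the interior is asserted, not proved. The two routes you sketch both fail as stated. First, convexity of $u\mapsto\log\sum_n\binom{q/2}{n}^2e^{2nu}$ is automatic (log-sum-exp with nonnegative weights), and $(q/p_*)\log(1+e^{p_*u})$ is also convex; two convex functions with matching boundary behavior at $u\to0^-$ and $u\to-\infty$ are not thereby comparable, so no inequality follows. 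Second, the numerical fallback cannot close the argument: the inequality must hold for \emph{every} $q\in(2,\infty)$, so the parameter space is the non-compact set $\{(q,s):q>2,\ s\in[0,1]\}$, and a grid check with Lipschitz propagation on ``a compact sub-interval'' cannot be uniform in $q$. (The paper's computer assistance is of a different kind: a symbolic verification of an algebraic identity valid for all $q$ simultaneously, not a numerical sweep.)

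The missing idea in the paper's argument is structural rather than numerical. Writing $F_q(x)=\int_0^1|x^{1/p}e^{2\pi it}+(1-x)^{1/p}|^q\,\textup{d}t$, one recognizes $F_q$ as a Legendre function $P_{q/2}$ evaluated along an explicit curve, whence $F_q$ satisfies a second-order linear ODE $a_qF_q''+b_qF_q'+c_qF_q=0$ with explicit coefficients (Lemma \ref{lm:ode}). An interior local maximum of $F_q$ on $(0,1/2)$ would force two flanking local minima; at each stationary point the ODE pins down the sign of $c_q$, producing at least two zeros of $c_q$ in $(0,1/2)$, which a direct monotonicity analysis of $c_q$ (after the substitution $x=1/(y+1)$) rules out. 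Your local expansions at the endpoints would then combine with this exclusion of interior maxima to give the global bound. Without some such mechanism controlling the whole interior uniformly in $q$, the proposal does not establish the lemma.
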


Interestingly, the proof of Lemma \ref{lm:twopoint} will rely on an ordinary differential equation (reminiscent of the Legendre ODE), which is easy to formulate in Lemma \ref{lm:ode} below, but also practically unverifiable without an assistance of a computer.

In the particular cases $q=2k$, $k\in\N$, the integral on the left-hand side of \eqref{eq:twopointineq} transforms as
\[ \int_0^1 |(\alpha e^{2\pi i t} + \beta)^k|^2 \,\textup{d}t 
= \int_0^1 \Big| \sum_{j=0}^{k} \binom{k}{j} \alpha^{k-j} \beta^{j} e^{2\pi i (k-j) t} \Big|^2 \,\textup{d}t 
= \sum_{j=0}^{k} \binom{k}{j}^2 \alpha^{2(k-j)} \beta^{2j}. \]
In the endpoint case
\[ \frac{1}{p} = \frac{1}{2k}\log_2\binom{2k}{k} \]
substitutions 
\[ a=\alpha^p, \quad b=\beta^p, \quad r=\frac{2k}{p} \]
transform \eqref{eq:twopointineq} into
\begin{equation}\label{eq:twopointineq0} 
\sum_{j=0}^{k} \binom{k}{j}^2 a^{\frac rk (k-j)} b^{\frac rkj} \leq (a+b)^r
\end{equation}
for $a,b\in[0,\infty)$ and $r=\log_2\binom{2k}{k}$.
Inequality \eqref{eq:twopointineq0} was established for $k=2$ in \cite[Lemma 8]{KT17}, for $k\leq10$ in \cite[Lemma 5]{DGIM21}, and for a general $k\in\N$ in \cite[Theorem 1]{Kov23}.

In particular, the range $2<q<4$ in Lemma \ref{lm:twopoint} and Theorem \ref{thm:main} has previously been completely unexplored, while we are now in position to describe an attractive information-theoretic application obtained by merely studying the behavior of \eqref{eq:improvedHY} as $q\to2+$.

Beckner's sharp Hausdorff--Young inequality on $\R$ from \cite{Bec75} implied a sharp uncertainty principle for the Shannon entropy on $\R$, previously conjectured by Hirschman \cite{Hir57}.
In analogy with that, we formulate a similar entropic uncertainty principle on $\T^d\times\Z^d$.
Suppose that a finitely supported function $f\colon\Z^d\to\mathbb{C}$ is normalized as
\begin{equation}\label{eq:normalization}
\sum_{x\in\Z^d} |f(x)|^2 = 1.
\end{equation}
Then $|f|^2$ can be thought of as a probability mass function on $\Z^d$ and its Shannon entropy (measured in bits) is defined as
\[ \textup{H}_{\Z^d}(|f|^2) := - \sum_{x\in\Z^d} |f(x)|^2 \log_2 |f(x)|^2, \]
where $0\log_2 0$ is interpreted as $0$.
Note that $|f(x)|\leq 1$ implies
\[ \textup{H}_{\Z^d}(|f|^2) \geq 0. \]
By Plancherel's identity then we also have
\[ \int_{\T^d} |\widehat{f}(\xi)|^2 \,\textup{d}\xi = 1, \]
so $|\widehat{f}|^2$ can be thought of as a probability density function on $\T^d$. Its Shannon entropy is naturally defined as
\[ \textup{H}_{\T^d}(|\widehat{f}|^2) := - \int_{\T^d} |\widehat{f}(\xi)|^2 \log_2 |\widehat{f}(\xi)|^2 \,\textup{d}\xi, \]
but $\textup{H}_{\T^d}(|\widehat{f}|^2)$ no longer needs to be nonnegative.
The ordinary Hausdorff--Young inequality \eqref{eq:ordinaryHY} implies the entropic uncertainty bound\footnote{The proof is an adaptation of the analogous claim on the real line \cite{Hir57,Bec75}, but it can also be read between the lines of our proof presented in Section \ref{sec:entropy}.}
\begin{equation}\label{eq:uncertainty0}
\textup{H}_{\T^d}(|\widehat{f}|^2) + \textup{H}_{\Z^d}(|f|^2) \geq 0.
\end{equation}
We are able to give an improvement for functions $f$ supported in $\{0,1\}^d$.

\begin{corollary}\label{cor:entropy}
For every $d\in\N$ and every function $f\colon\Z^d\to\mathbb{C}$ supported in the binary cube $\{0,1\}^d$ and satisfying $\|f\|_{\ell^2(\{0,1\}^d)}=1$ one has
\begin{equation}\label{eq:uncertainty}
\textup{H}_{\T^d}(|\widehat{f}|^2) + \Big(\underbrace{\frac{1}{\ln 2}-1}_{0.442695\ldots}\Big) \underbrace{\textup{H}_{\Z^d}(|f|^2)}_{\geq 0} \geq 0
\end{equation}
and the equality can be attained even in the cases when $\textup{H}_{\Z^d}(|f|^2)\neq0$, e.g., when $f$ is constant on $\{0,1\}^d$.
\end{corollary}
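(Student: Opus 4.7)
The plan is to invoke Theorem \ref{thm:main} and take the right-derivative of the binary Hausdorff--Young inequality at the Plancherel point $q = 2$. For $q\in[2,\infty)$ let $p(q)\in(1,2]$ be defined by $1/p(q) = \phi(q) := q^{-1}\log_2\binom{q}{q/2}$, so that $p(2)=2$, and set
$$ \Psi(q) := \log\|\widehat{f}\|_{\textup{L}^q(\T^d)} - \log\|f\|_{\ell^{p(q)}(\{0,1\}^d)}. $$
Plancherel's identity together with the normalization \eqref{eq:normalization} gives $\Psi(2)=0$, while Theorem \ref{thm:main} gives $\Psi(q)\leq 0$ for all $q\in[2,\infty)$. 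Since $f$ is finitely supported and $\widehat{f}$ is a trigonometric polynomial (hence uniformly bounded and bounded away from zero on any small neighbourhood of a given $\xi$ where it does not vanish), differentiation under the sum and under the integral is easily justified, so the right-derivative exists and satisfies $\Psi'(2+)\leq 0$.

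The second step is a short calculation of the two pieces of $\Psi'(2+)$. Direct differentiation of $q\mapsto q^{-1}\log\int|\widehat f|^q$ and $p\mapsto p^{-1}\log\sum|f|^p$ at the Plancherel point, using $\|\widehat f\|_{\textup{L}^2}=\|f\|_{\ell^2}=1$, yields
$$ \frac{d}{dq}\log\|\widehat{f}\|_{\textup{L}^q(\T^d)}\bigg|_{q=2} = -\frac{\ln 2}{4}\,\textup{H}_{\T^d}(|\widehat{f}|^2), \quad \frac{d}{dp}\log\|f\|_{\ell^p(\{0,1\}^d)}\bigg|_{p=2} = -\frac{\ln 2}{4}\,\textup{H}_{\Z^d}(|f|^2). $$
Chain-ruling through $p(q)$ contributes the factor $p'(2)$, which follows from $1/p(q)=\phi(q)$ as $p'(2) = -4\phi'(2)$. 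Differentiating $\phi$ via the gamma-function representation of $\binom{q}{q/2}$ reduces to the elementary digamma identity $\psi(3)-\psi(2)=1/2$ and gives $\phi'(2) = \tfrac{1}{4}(\tfrac{1}{\ln 2}-1)$, so $p'(2) = -(\tfrac{1}{\ln 2}-1)$. Plugging everything into $\Psi'(2+)\leq 0$ and multiplying through by $-4/\ln 2$ delivers \eqref{eq:uncertainty}.

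For the equality assertion, the inequality tensorizes: both $\textup{H}_{\T^d}(|\widehat{f}|^2)$ and $\textup{H}_{\Z^d}(|f|^2)$ are additive under $f \mapsto f_1\otimes\cdots\otimes f_d$, so it suffices to exhibit equality for $d=1$. Taking $f = 2^{-1/2}\mathbbm{1}_{\{0,1\}}$ gives $\textup{H}_{\Z}(|f|^2) = 1$ and $|\widehat{f}(\xi)|^2 = 2\cos^2(\pi\xi)$, and the standard evaluation $\int_0^{\pi/2}\cos^2 u\,\ln\cos u\,\textup{d}u = \tfrac{\pi}{8}(1-2\ln 2)$ produces $\textup{H}_{\T}(|\widehat{f}|^2) = 1 - \tfrac{1}{\ln 2}$, making \eqref{eq:uncertainty} an equality. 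The only mildly delicate point in the whole argument is the $\phi'(2)$ accounting: every other step is routine differentiation, but the constants must line up so that precisely $\tfrac{1}{\ln 2}-1$ emerges as the coefficient of $\textup{H}_{\Z^d}(|f|^2)$.
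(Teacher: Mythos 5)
Your proposal is correct and follows essentially the same route as the paper: both differentiate the sharp binary Hausdorff--Young inequality along the endpoint curve $p=p(q)$ at the Plancherel point $q=2$, using the digamma identity $\psi(3)-\psi(2)=1/2$ to extract $p'(2)=-(\tfrac{1}{\ln 2}-1)$, and both verify equality for the normalized indicator of the cube via the integral $\int_0^{\pi/2}\cos^2 u\,\ln\cos u\,\textup{d}u$ (you reduce to $d=1$ by tensorization, the paper computes in dimension $d$ directly, which is an immaterial difference). All constants check out.
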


Estimate \eqref{eq:uncertainty} is clearly a refinement of \eqref{eq:uncertainty0} for functions supported in $\{0,1\}^d$ and it is sharp due to the second claim of the corollary.
Inequality \eqref{eq:uncertainty} can also be rewritten as
\[ \Big(1-\ln\frac{e}{2}\Big)\textup{H}_{\T^d}(|\widehat{f}|^2) + \Big(\ln\frac{e}{2}\Big) \textup{H}_{\Z^d}(|f|^2) \geq 0. \]
It is interesting to notice that the constant $\ln(e/2)$ also appears in the sharp entropic uncertainty principle on the real line:
\[ \textup{H}_{\R}(|\widehat{f}|^2) + \textup{H}_{\R}(|f|^2) \geq \ln\frac{e}{2}. \]
Here we do not define the entropy $\textup{H}_{\R}$ of probability densities on $\R$, as these are not of our current interest, and we rather just refer to \cite{Hir57,Bec75} for details.

\subsection{Young's convolution inequality}
The convolution of discrete functions $f,g\colon\Z^d\to[0,\infty)$ is defined as
\[ f\ast g\colon\Z^d\to[0,\infty],\quad (f\ast g)(x) := \sum_{y\in\Z^d} f(x-y) g(y). \]
Once again we restrict our attention to functions $f$ and $g$ supported in the binary cube $\{0,1\}^d$.
Using a similar approach as in the proof of Theorem \ref{thm:main}, we will also refine the ``diagonal'' special case of Young's convolution inequality,
\[ \|f\ast g\|_{\ell^q(\Z^d)} \leq \|f\|_{\ell^p(\Z^d)} \|g\|_{\ell^p(\Z^d)}, \]
which is known to hold whenever the exponents $p,q\in[1,\infty]$ satisfy 
\begin{equation}\label{eq:rangeYoung1}
2/p\geq 1+1/q;
\end{equation}
see the left half of Figure \ref{fig:Young}.

\begin{figure}
\includegraphics[width=0.4\linewidth]{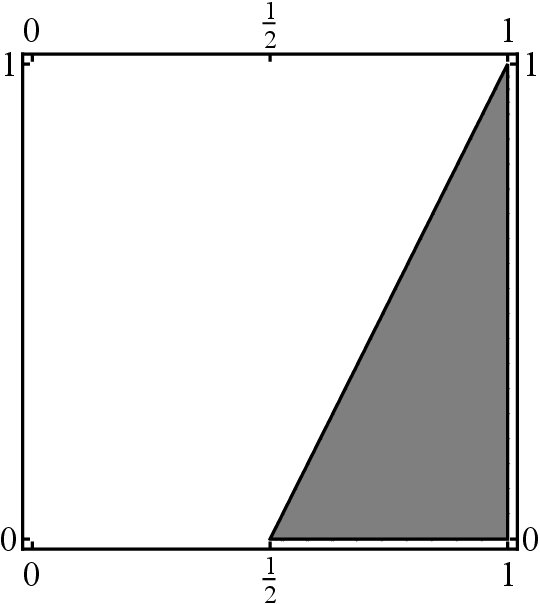}\hspace*{1cm}
\includegraphics[width=0.4\linewidth]{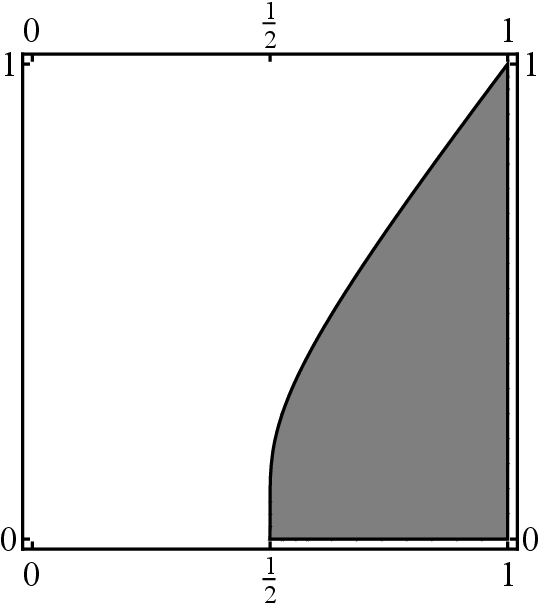}
\caption{Regions determined by points $(1/p,1/q)\in[0,1]^2$ from the exponent ranges \eqref{eq:rangeYoung1} (left) and \eqref{eq:rangeYoung2} (right).}
\label{fig:Young}
\end{figure}

\begin{theorem}\label{thm:Young}
The following are equivalent for $p,q\in[1,\infty]$.
\begin{enumerate}
\item The estimate
\begin{equation}\label{eq:improvedYoung} 
\|f\ast g\|_{\ell^q(\Z^d)} \leq \|f\|_{\ell^p(\{0,1\}^d)} \|g\|_{\ell^p(\{0,1\}^d)}
\end{equation}
holds for every $d\in\N$ and every $f,g\colon\Z^d\to\mathbb{C}$ supported in the binary cube $\{0,1\}^d$.
\item The Lebesgue exponents $p,q$ satisfy
\begin{equation}\label{eq:rangeYoung2}
\frac{1}{p} \geq 
\begin{cases} 
\displaystyle\frac{1}{2q}\log_2(2^q+2) & \text{for } q\in[1,\infty), \\[3mm]
\displaystyle\frac{1}{2} & \text{for } q=\infty.
\end{cases}
\end{equation}
\end{enumerate}
\end{theorem}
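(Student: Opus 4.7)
The plan is to mirror the blueprint of Theorem~\ref{thm:main}: necessity of the range \eqref{eq:rangeYoung2} is established by testing \eqref{eq:improvedYoung} on a single explicit extremizer, and sufficiency is obtained by reducing to the boundary of \eqref{eq:rangeYoung2} and running an induction on $d$ powered by a four-variable ``two-point'' Young inequality.

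For necessity, I would test \eqref{eq:improvedYoung} with $f = g = \mathbbm{1}_{\{0,1\}^d}$. Since $\mathbbm{1}_{\{0,1\}} \ast \mathbbm{1}_{\{0,1\}}$ takes the values $(1, 2, 1)$ on $\{0, 1, 2\}$, the convolution tensorizes coordinate-wise, so $\|f \ast g\|_{\ell^q(\Z^d)}^q = (2^q + 2)^d$ for $q \in [1, \infty)$ and $\|f \ast g\|_{\ell^\infty(\Z^d)} = 2^d$, whereas $\|f\|_{\ell^p}\|g\|_{\ell^p} = 2^{2d/p}$. Comparing both sides yields precisely \eqref{eq:rangeYoung2} and simultaneously identifies $\mathbbm{1}_{\{0,1\}^d}$ as an extremizer along the entire boundary curve.

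For sufficiency, I first reduce to the boundary relation $1/p = \tfrac{1}{2q}\log_2(2^q + 2)$ (or $1/p = 1/2$ when $q = \infty$): since all functions are finitely supported, $\ell^p$ norms are nonincreasing in $p$, so an estimate at one exponent $p_0$ automatically propagates to every smaller $p$. Using $|f \ast g| \le |f| \ast |g|$ I may further assume $f, g \ge 0$. I then induct on $d$. The case $d = 0$ is trivial. For the inductive step, decompose $F, G\colon \{0,1\}^{d+1} \to [0, \infty)$ along the last coordinate by $F_i(x') := F(x', i)$, $G_j(x') := G(x', j)$ for $i, j \in \{0, 1\}$. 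Since the three slices of $F \ast G$ over the last coordinate $\{0, 1, 2\}$ are $F_0 \ast G_0$, $F_0 \ast G_1 + F_1 \ast G_0$, and $F_1 \ast G_1$, one has
\[
\|F \ast G\|_{\ell^q(\Z^{d+1})}^q
= \|F_0 \ast G_0\|_{\ell^q}^q
+ \|F_0 \ast G_1 + F_1 \ast G_0\|_{\ell^q}^q
+ \|F_1 \ast G_1\|_{\ell^q}^q.
\]
Applying the $\ell^q$ triangle inequality to the middle term, the inductive hypothesis $\|F_i \ast G_j\|_{\ell^q} \le \|F_i\|_{\ell^p}\|G_j\|_{\ell^p}$ to each bilinear piece, and writing $a_i := \|F_i\|_{\ell^p}$, $b_j := \|G_j\|_{\ell^p}$, the whole inductive step collapses to the four-variable inequality
\[
(a_0 b_0)^q + (a_0 b_1 + a_1 b_0)^q + (a_1 b_1)^q
\le (a_0^p + a_1^p)^{q/p} (b_0^p + b_1^p)^{q/p}
\]
at the boundary exponents, which is exactly the role played by Lemma~\ref{lm:twopointYoung}.

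The main obstacle is proving this four-variable inequality at the critical relation between $p$ and $q$. By homogeneity one may normalize $a_0^p + a_1^p = b_0^p + b_1^p = 1$ and parametrize $a_0^p = s$, $b_0^p = t$, reducing to a sharp two-variable inequality on $[0, 1]^2$ with equality at the corners $(0, 0)$ and $(1, 1)$ as well as at the symmetric interior point $(\tfrac12, \tfrac12)$ corresponding to the extremizer $\mathbbm{1}_{\{0,1\}^{d+1}}$. The endpoints $q = 1$ (where the sum collapses via $\sum_{i, j} a_i b_j = (a_0 + a_1)(b_0 + b_1)$) and $q = \infty$ (where Cauchy--Schwarz with $p = 2$ suffices) are routine, but the intermediate range of $q$ will require a delicate calculus argument, presumably via reduction to a one-parameter problem and the analysis of an auxiliary differential equation in the same spirit as Lemma~\ref{lm:ode}, with partial computer assistance --- the same flavor of difficulty that underlies Lemma~\ref{lm:twopoint}.
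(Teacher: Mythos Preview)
Your proposal is correct and follows essentially the same blueprint as the paper: necessity via the test function $\mathbbm{1}_{\{0,1\}^d}$, and sufficiency via induction on $d$ reducing to the four-point inequality of Lemma~\ref{lm:twopointYoung}, which you correctly identify as the main analytic obstacle.

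There is one minor but genuine difference in how the inductive step is organized. You apply the induction hypothesis first to each of the four pieces $F_i\ast G_j$ (after a single use of the $\ell^q$ triangle inequality on the middle slice), and only then invoke the four-point inequality on the resulting scalars $a_i=\|F_i\|_{\ell^p}$, $b_j=\|G_j\|_{\ell^p}$. The paper instead works pointwise in $x$: it applies Minkowski over the inner convolution sum, then the four-point inequality pointwise to produce an auxiliary convolution $\widetilde{f}\ast\widetilde{g}$ with $\widetilde{f}(x)=(f_0(x)^p+f_1(x)^p)^{1/p}$, and finally applies the induction hypothesis once to $\widetilde{f},\widetilde{g}$. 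Both orderings are valid and rely on exactly the same key lemma; your version is arguably a touch cleaner since it avoids introducing the auxiliary functions. Your anticipated shape of the proof of Lemma~\ref{lm:twopointYoung} (normalization to a two-variable problem on $[0,1]^2$, interior critical point at $(\tfrac12,\tfrac12)$, computer-assisted calculus) is also accurate in spirit, though the paper's actual argument proceeds via a family of dissecting curves and repeated Rolle's theorem rather than an ODE.
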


The particular case $q=2$, $p=4/\log_2 6$ of estimate \eqref{eq:improvedYoung} has been shown and commented on in \cite[Remark 9]{KT17}.

For a real number $\kappa\in[1,\infty)$ and a finite set $A\subset\Z^d$ one can define another generalization of the additive energy of $A$ as
\[ \widetilde{E}_{\kappa}(A) := \big\|\mathbbm{1}_A \ast\widetilde{\mathbbm{1}}_A\big\|_{\ell^\kappa(\Z^d)}^{\kappa}, \]
where $\widetilde{f}$ denotes the reflection of $f$, i.e., $\widetilde{f}(x):=f(-x)$ for $x\in\Z^d$.
In the particular case when $\kappa$ is a positive integer this notion was studied in \cite{SS13,Shk14,DGIM21} and it possesses a more direct combinatorial interpretation:
\[ \widetilde{E}_{\kappa}(A) = \big| \{ (a_1,a_2,\ldots,a_{2\kappa-1},a_{2\kappa})\in A^{2\kappa} \,:\, a_1-a_2 = a_3-a_4 = \cdots = a_{2\kappa-1}-a_{2\kappa} \} \big|. \]
The following consequence of Theorem \ref{thm:Young} generalizes \cite[Theorem 1]{DGIM21}, which was concerned with integer parameters $\kappa$ only.

\begin{corollary}
For every $\kappa\in[1,\infty)$, $d\in\N$, and $A\subseteq\{0,1\}^d$ one has
\[ \widetilde{E}_{\kappa}(A) \leq |A|^{\widetilde{r}}, \]
where
\[ \widetilde{r} = \widetilde{r}(\kappa) := \log_2(2^\kappa+2). \]
Besides trivial cases $|A|\leq 1$, the equality is also attained for some sets with $|A|>1$, e.g., if $A=\{0,1\}^d$.
\end{corollary}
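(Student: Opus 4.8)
The plan is to derive this exactly as Corollary \ref{cor:additive} was derived from Theorem \ref{thm:main}, now applying Theorem \ref{thm:Young} with $f$ and $g$ both (essentially) equal to $\mathbbm{1}_A$. The only feature that distinguishes $\widetilde{E}_\kappa$ from $E_\kappa$ is that it involves the \emph{reflected} indicator $\widetilde{\mathbbm{1}_A}=\mathbbm{1}_{-A}$, and $-A$ need not lie in $\{0,1\}^d$; I would remove this obstruction by a translation. Put $\mathbf{1}:=(1,\ldots,1)\in\Z^d$ and $B:=\mathbf{1}-A$. Then $B\subseteq\{0,1\}^d$ and $|B|=|A|$, and since $\mathbbm{1}_B(y)=\mathbbm{1}_{-A}(y-\mathbf{1})$, a change of summation variable gives $(\mathbbm{1}_A\ast\mathbbm{1}_B)(x)=(\mathbbm{1}_A\ast\widetilde{\mathbbm{1}_A})(x-\mathbf{1})$ for all $x\in\Z^d$. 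As translations are isometries of $\ell^\kappa(\Z^d)$, this yields $\|\mathbbm{1}_A\ast\mathbbm{1}_B\|_{\ell^\kappa(\Z^d)}=\|\mathbbm{1}_A\ast\widetilde{\mathbbm{1}_A}\|_{\ell^\kappa(\Z^d)}=\widetilde{E}_\kappa(A)^{1/\kappa}$.

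Next I would apply Theorem \ref{thm:Young} with $q=\kappa\in[1,\infty)$ and $p$ chosen as the endpoint of the first branch of \eqref{eq:rangeYoung2}, namely $\tfrac{1}{p}=\tfrac{1}{2\kappa}\log_2(2^\kappa+2)$; one first checks that $p\in[1,\infty]$, which amounts to $2^\kappa+2\leq4^\kappa$ and holds for every $\kappa\geq1$. Since $f=\mathbbm{1}_A$ and $g=\mathbbm{1}_B$ are supported in $\{0,1\}^d$, estimate \eqref{eq:improvedYoung} applies and gives
\[ \widetilde{E}_\kappa(A)^{1/\kappa} = \|\mathbbm{1}_A\ast\mathbbm{1}_B\|_{\ell^\kappa(\Z^d)} \leq \|\mathbbm{1}_A\|_{\ell^p(\{0,1\}^d)}\,\|\mathbbm{1}_B\|_{\ell^p(\{0,1\}^d)} = |A|^{1/p}\,|B|^{1/p} = |A|^{2/p}. \]
Raising to the power $\kappa$ turns this into $\widetilde{E}_\kappa(A)\leq|A|^{2\kappa/p}=|A|^{\log_2(2^\kappa+2)}=|A|^{\widetilde{r}}$, which also trivially covers the cases $|A|\leq1$.

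Finally I would verify sharpness by computing $\widetilde{E}_\kappa$ directly for $A=\{0,1\}^d$. Here $(\mathbbm{1}_A\ast\widetilde{\mathbbm{1}_A})(x)$ counts the $z\in\{0,1\}^d$ with $x+z\in\{0,1\}^d$, hence equals $2^k$ when $x\in\{-1,0,1\}^d$ has exactly $k$ zero coordinates and $0$ otherwise; grouping the $\binom{d}{k}2^{d-k}$ such points and applying the binomial theorem,
\[ \widetilde{E}_\kappa(A) = \sum_{k=0}^{d}\binom{d}{k}\,2^{d-k}\,\big(2^{k}\big)^{\kappa} = \sum_{k=0}^{d}\binom{d}{k}\,2^{d-k}\,\big(2^{\kappa}\big)^{k} = (2+2^{\kappa})^{d} = \big(2^{d}\big)^{\log_2(2^\kappa+2)} = |A|^{\widetilde{r}}, \]
so equality holds, and $|A|=2^d>1$ for every $d\geq1$. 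Given Theorem \ref{thm:Young}, there is no genuinely hard step: the argument is a short reduction, and the only points needing care are the reflection-to-translation bookkeeping in the first step and the admissibility check $p\in[1,\infty]$.
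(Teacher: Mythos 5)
Your proposal is correct and follows essentially the same route as the paper: choose $q=\kappa$, $p=2\kappa/\log_2(2^\kappa+2)$, translate the reflected indicator by $(1,\ldots,1)$ to bring it back into $\{0,1\}^d$, and apply Theorem~\ref{thm:Young}. You spell out the translation bookkeeping, the admissibility of $p$, and the sharpness computation for $A=\{0,1\}^d$ in more detail than the paper's one-line argument, but there is no difference of substance.
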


One simply takes $q=\kappa$, $p=2\kappa/\log_2(2^\kappa+2)$ and applies\footnote{Note that $\mathbbm{1}_A$ being supported in $\{0,1\}^d$ makes $\widetilde{\mathbbm{1}}_A$ supported in $\{-1,0\}^d$. However, translation by the vector $(1,\ldots,1)$ places $\widetilde{\mathbbm{1}}_A$ back over $\{0,1\}^d$, and the $\ell^p$-norms do not see a difference.} \eqref{eq:improvedYoung} to $f=\mathbbm{1}_A$ and $g=\widetilde{\mathbbm{1}}_A$. 

Let us remark that we were not able to characterize completely validity of general Young's inequality
\begin{equation}\label{eq:toogeneralYoung}
\|f\ast g\|_{\ell^r(\Z^d)} \leq \|f\|_{\ell^p(\{0,1\}^d)} \|g\|_{\ell^q(\{0,1\}^d)} 
\end{equation}
for three exponents $p,q,r\in[1,\infty]$. Serious complications arise since $f=g=\mathbbm{1}_{\{0,1\}^d}$ is not always an ``extremal case,'' see Remark \ref{rem:notamax}.

The key ingredient in the proof of Theorem~\ref{thm:Young} is the following ``four-point inequality.''

\begin{lemma}\label{lm:twopointYoung}
Let $p\in(1,2)$ and $q\in(1,\infty)$ satisfy \eqref{eq:rangeYoung2}.
Then
\begin{equation}\label{eq:twopointineqYoung}
\big( (\alpha_0 \beta_0)^q+(\alpha_0 \beta_1 + \alpha_1 \beta_0)^q + (\alpha_1 \beta_1)^q \big)^{1/q}
\leq 
(\alpha_0^p + \alpha_1^p)^{1/p} (\beta_0^p + \beta_1^p)^{1/p}
\end{equation}
for every $\alpha_0,\alpha_1,\beta_0,\beta_1\in[0,\infty)$.
\end{lemma}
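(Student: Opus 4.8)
The plan is to reduce the four-point inequality \eqref{eq:twopointineqYoung} to a homogeneous two-variable inequality and then verify it by a combination of scaling, reduction to the boundary curve \eqref{eq:rangeYoung2}, and elementary calculus (possibly with computer assistance, in the spirit of Lemma~\ref{lm:twopoint}). First I would note that \eqref{eq:twopointineqYoung} is homogeneous of degree $1$ separately in $(\alpha_0,\alpha_1)$ and in $(\beta_0,\beta_1)$, so after discarding the trivial cases where some pair vanishes we may normalize $\alpha_0^p+\alpha_1^p=1$ and $\beta_0^p+\beta_1^p=1$, reducing the claim to
\[
(\alpha_0\beta_0)^q+(\alpha_0\beta_1+\alpha_1\beta_0)^q+(\alpha_1\beta_1)^q\le 1.
\]
Next I would observe that it suffices to treat the boundary case $\tfrac1p=\tfrac1{2q}\log_2(2^q+2)$ for each fixed $q\in(1,\infty)$: indeed, if the inequality holds for the smallest admissible $p$ (equivalently, largest $1/p$ on the curve), then enlarging $p$ only shrinks the left-hand side, since $(\alpha_0^p+\alpha_1^p)^{1/p}$ is nonincreasing in $p$. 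So the problem becomes a one-parameter family indexed by $q$.

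With $p$ pinned to the curve, I would introduce the substitutions $a_i=\alpha_i^p$, $b_i=\beta_i^p$ and set $s:=q/p=\tfrac12\log_2(2^q+2)$, rewriting the target as
\[
(a_0b_0)^{s}+(a_0^{1/p}b_1^{1/p}+a_1^{1/p}b_0^{1/p})^{q}+(a_1b_1)^{s}\le (a_0+a_1)^{s}(b_0+b_1)^{s},
\]
and then normalize $a_0+a_1=b_0+b_1=1$. The middle term is the only genuinely nonlinear obstruction because of the cross term $\alpha_0\beta_1+\alpha_1\beta_0$; by symmetry in swapping $\alpha_0\leftrightarrow\alpha_1$ together with $\beta_0\leftrightarrow\beta_1$, and by symmetry in swapping the roles of $\alpha$ and $\beta$, I expect the worst case to be either the ``diagonal'' configuration $\alpha_0=\alpha_1$, $\beta_0=\beta_1$ (which, with the normalization, gives $\alpha_i=\beta_i=2^{-1/p}$ and reproduces exactly the extremal example $A=\{0,1\}^d$, forcing equality and thereby confirming sharpness of the exponent in \eqref{eq:rangeYoung2}), or a boundary configuration where one of the variables is $0$, in which case the inequality collapses to the trivial $(\alpha_0\beta_0)^q+(\alpha_1\beta_0)^q\le\beta_0^q(\alpha_0^p+\alpha_1^p)^{q/p}$ that follows from $q\ge p$ and the power-mean inequality. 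The main work is then to show that no interior configuration beats the diagonal one.

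The hard part will be exactly this last step: proving that the function
\[
F(\alpha_0,\alpha_1,\beta_0,\beta_1):=\frac{(\alpha_0\beta_0)^q+(\alpha_0\beta_1+\alpha_1\beta_0)^q+(\alpha_1\beta_1)^q}{(\alpha_0^p+\alpha_1^p)^{q/p}(\beta_0^p+\beta_1^p)^{q/p}}
\]
attains its maximum on the diagonal. I would attempt this by fixing $\beta_0,\beta_1$ and maximizing in $(\alpha_0,\alpha_1)$ on the normalized simplex: setting $\alpha_0=\cos^{2/p}\theta$, $\alpha_1=\sin^{2/p}\theta$ with $\theta\in[0,\pi/2]$, differentiating in $\theta$, and showing the critical-point equation has the right structure. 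Because the exponent $s=\tfrac12\log_2(2^q+2)$ is transcendental in $q$, a fully closed-form analysis seems out of reach, so—mirroring the authors' treatment of Lemma~\ref{lm:twopoint} via the auxiliary ODE—I expect the decisive monotonicity/convexity estimate to be reduced to a single-variable inequality in a bounded region that is then certified numerically (interval arithmetic or a Taylor-with-remainder bound), with the endpoint $q\to1+$ and the limit $q\to\infty$ handled separately by direct expansion. An alternative route worth trying is to fix the product structure and apply a two-point symmetrization (moving mass from $(\alpha_0,\alpha_1)$ toward equality while tracking the sign of the derivative of the cross term), which would give a cleaner argument if the relevant derivative keeps a constant sign throughout the admissible range of $q$.
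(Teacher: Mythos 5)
Your high-level reduction matches the paper's: exploit homogeneity to normalize, restrict to the boundary exponent $p=2q/\log_2(2^q+2)$, substitute $a_j=\alpha_j^p$, $b_j=\beta_j^p$, and study a scalar function on $[0,1]^2$ with value $1$ at the four corners and at the center. (One small caution: the boundary exponent is the \emph{largest} admissible $p$, not the smallest; lowering $p$ inflates $(\alpha_0^p+\alpha_1^p)^{1/p}$ and hence the right-hand side, so the endpoint is the hardest case. Your stated direction of monotonicity of the $\ell^p$ norm is correct, but the phrase ``smallest admissible $p$'' reverses what happens.) Your boundary check (one variable vanishes, reduces to the $\ell^p\hookrightarrow\ell^q$ containment since $p<q$) and the identification of the equality configuration are both fine.

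The genuine gap is the interior argument, and it is not a small one. Your proposal to fix $\beta$, parametrize $\alpha$ on the simplex, differentiate, and then certify something numerically does not obviously close: for a fixed non-symmetric $\beta$ the maximizing $\alpha$ is generally \emph{not} symmetric, so you cannot conclude the worst case is the diagonal by a one-variable-at-a-time optimization; you would have to track a moving critical manifold in $(x,y,q)$, a genuine three-parameter numerical certification with no structural reason for it to terminate cleanly. Moreover your ``boundary or diagonal'' dichotomy omits an entire one-parameter family of would-be extremal configurations: the anti-diagonal $y=1-x$ (equivalently, $\beta$ equal to the flip of $\alpha$), on which the bound is a separate nontrivial inequality. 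The paper handles the interior by an entirely different mechanism: it finds a linear second-order PDE for $G_q$ whose characteristic vector field integrates to the curves $\Theta_a=\{y/(1-y)=a\,x/(1-x)\}$; it verifies $G_q\leq 1$ on the anti-diagonal via the hyperbolic-cosine inequality \eqref{eq:thetwoineq}, and that $G_q\vert_{\Theta_a}$ has a strict local maximum at its crossing with the anti-diagonal via \eqref{eq:thetwoineq2}; and it then shows by repeated differentiation and Rolle's theorem that $G_q\vert_{\Theta_a}$ cannot have the seven or more stationary points that an off-anti-diagonal interior maximum would force. None of that is a refinement of what you outlined; it is a different idea, and without it the plan stalls precisely at the step you labeled ``the hard part.''
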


Inequality \eqref{eq:improvedYoung}, just like \eqref{eq:improvedHY}, also has an interesting application in information theory, perhaps one of even more fundamental nature. If $f\colon\Z^d\to[0,\infty)$ is a finitely supported function satisfying $\|f\|_{\ell^1(\Z^d)}=1$, then it can itself be thought of as a probability mass function on $\Z^d$ and its Shannon entropy is now 
\[ \textup{H}_{\Z^d}(f) := - \sum_{x\in\Z^d} f(x) \log_2 f(x). \]
If $g\colon\Z^d\to[0,\infty)$ is another function with finite support and normalized as $\|g\|_{\ell^1(\Z^d)}=1$, then both $g$ and $f\ast g$ can also be interpreted as probability mass functions on $\Z^d$. 
Jensen's inequality and concavity of the function $x\mapsto -x\log_2 x$ easily imply
\[ \textup{H}_{\Z^d}(f\ast g) \geq \max\{\textup{H}_{\Z^d}(f),\textup{H}_{\Z^d}(g)\}. \]
Averaging we immediately get a more symmetric version,
\begin{equation}\label{eq:entropy12}
\textup{H}_{\Z^d}(f\ast g) \geq \frac{1}{2} \big( \textup{H}_{\Z^d}(f) +\textup{H}_{\Z^d}(g) \big).
\end{equation}
The constant $1/2$ in \eqref{eq:entropy12} is still the best possible: if $f$ and $g$ correspond to the symmetric binomial distribution $B(n,1/2)$, then $f\ast g$ corresponds to $B(2n,1/2)$, so $\textup{H}_{\Z^d}(f)$, $\textup{H}_{\Z^d}(g)$, and $\textup{H}_{\Z^d}(f\ast g)$ are all asymptotically equal to $(1/2)\log_2 n$; see for instance \cite{CW79}.
We are able to make an improvement over \eqref{eq:entropy12} under an additional constraint: that $f$ and $g$ are supported in the binary cube.

\begin{corollary}\label{cor:entropy2}
For every $d\in\N$ and every $f,g\colon\Z^d\to[0,\infty)$ supported in $\{0,1\}^d$ and satisfying $\|f\|_{\ell^1(\{0,1\}^d)}=\|g\|_{\ell^1(\{0,1\}^d)}=1$ one has
\begin{equation}\label{eq:entropy34}
\textup{H}_{\Z^d}(f\ast g) \geq \frac{3}{4} \big( \textup{H}_{\Z^d}(f) +\textup{H}_{\Z^d}(g) \big).
\end{equation}
The equality is attained when $f=g=\mathbbm{1}_{\{0,1\}^d}$, which makes the constant $3/4$ sharp.
\end{corollary}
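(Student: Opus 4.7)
The plan is to apply Beckner's mechanism for extracting entropy inequalities from sharp $L^p$-$L^q$ estimates, now with Theorem \ref{thm:Young} in place of the Hausdorff--Young inequality: one linearizes \eqref{eq:improvedYoung} at the endpoint $q=p=1$, where both sides trivially equal $1$ for normalized nonnegative inputs, and reads off \eqref{eq:entropy34} from the slope of the sharp exponent curve \eqref{eq:rangeYoung2}.

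Concretely, I would parametrize the boundary of \eqref{eq:rangeYoung2} by
\[ \frac{1}{p(q)} := \frac{1}{2q}\log_2(2^q+2), \qquad q\in[1,\infty), \]
so that $p(1)=1$ and Theorem \ref{thm:Young} yields $\|f*g\|_{\ell^q(\Z^d)}\leq\|f\|_{\ell^{p(q)}(\{0,1\}^d)}\|g\|_{\ell^{p(q)}(\{0,1\}^d)}$ for $q\geq 1$. Setting
\[ \phi(q) := \ln\|f*g\|_{\ell^q(\Z^d)} - \ln\|f\|_{\ell^{p(q)}(\{0,1\}^d)} - \ln\|g\|_{\ell^{p(q)}(\{0,1\}^d)}, \]
I would observe that $\phi(1)=0$ (because $\|f\|_{\ell^1}=\|g\|_{\ell^1}=\|f*g\|_{\ell^1}=1$) while $\phi(q)\leq 0$ for all $q\geq 1$; hence $\phi'(1^+)\leq 0$. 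The elementary identity
\[ \frac{d}{dr}\ln\|h\|_{\ell^r(\Z^d)}\bigg|_{r=1} \;=\; \sum_{x\in\Z^d} h(x)\ln h(x) \;=\; -(\ln 2)\,\textup{H}_{\Z^d}(h), \]
valid for any finitely supported probability mass function $h$ with the convention $0\ln 0:=0$, combined with the chain rule for $p(q)$, converts $\phi'(1^+)\leq 0$ into
\[ \textup{H}_{\Z^d}(f*g) \;\geq\; p'(1)\bigl(\textup{H}_{\Z^d}(f) + \textup{H}_{\Z^d}(g)\bigr). \]
A routine differentiation of $1/p(q)=\frac{1}{2q\ln 2}\ln(2^q+2)$ at $q=1$ gives $(1/p)'(1) = -1+\tfrac{1}{4} = -\tfrac{3}{4}$, and since $p(1)=1$ this yields $p'(1)=3/4$, which is exactly \eqref{eq:entropy34}.

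For the sharpness assertion I would test $f=g=2^{-d}\mathbbm{1}_{\{0,1\}^d}$ (the normalized version of the extremizer named in the statement): each of $f,g$ has entropy $d$, and a short calculation shows $(f*g)(x)=2^{k(x)-2d}$ on $\{0,1,2\}^d$, where $k(x)$ is the number of coordinates of $x$ equal to $1$. Summing against the counting weights $\binom{d}{k}2^{d-k}$ gives $\textup{H}_{\Z^d}(f*g)=3d/2$, matching $\tfrac{3}{4}(d+d)$. I do not anticipate any serious obstacle: the only technical point is differentiability of $\phi$ at $q=1$, but this is immediate because finite support makes $r\mapsto\|h\|_{\ell^r(\Z^d)}$ and $q\mapsto p(q)$ both smooth on $(0,\infty)$.
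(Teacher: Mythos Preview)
Your proposal is correct and follows essentially the same approach as the paper: both linearize the sharp Young inequality \eqref{eq:improvedYoung} along the boundary curve \eqref{eq:rangeYoung2} at the point $p=q=1$, extract the slope $p'(1)=3/4$, and verify sharpness on $f=g=2^{-d}\mathbbm{1}_{\{0,1\}^d}$ with the same computation $\textup{H}_{\Z^d}(f\ast g)=3d/2$. The only cosmetic difference is that you phrase the linearization via the derivative $\phi'(1^+)\le 0$ of a logarithmic difference, while the paper writes out explicit Taylor expansions in $\varepsilon=q-1$; these are equivalent.
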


Inequality \eqref{eq:entropy34} has the following probabilistic formulation. Let $X$ and $Y$ be two independent random variables (i.e., random elements) taking values in the binary cube $\{0,1\}^d\subset\Z^d$. Then
\[ \textup{H}_{\Z^d}(X+Y) \geq \frac{3}{4} \big( \textup{H}_{\Z^d}(X) +\textup{H}_{\Z^d}(Y) \big) \]
and this inequality is sharp. Namely, if $f$ and $g$ are respectively the laws of independent variables $X$ and $Y$, then $f\ast g$ is the law of their sum $X+Y$, so that the claim follows from \eqref{eq:entropy34}.

Corollary \ref{cor:entropy2} will be an immediate consequence of the sharp binary Young estimate \eqref{eq:improvedYoung} taken to the limit as $q\to1^+$ and $p\to1^+$. One can already notice that the upper boundary of the region on the left part of Figure \ref{fig:Young} has slope $2$, while the upper boundary on the right half of the same figure has slope $4/3$ at the point $(1,1)$; leading to the desired improvement.

\subsection{A comment on larger cubes}
Some of the aforementioned work has been extended to larger discrete cubes 
\[ \{0,1,\ldots,n-1\}^d; \] 
see \cite{DGIM21,Sha24,BCK24}.
We do not pursue this line of investigation here, since no definite sharp inequalities are known when $n\geq 3$.
For instance, the optimal exponent $p$ in the triadic Hausdorff--Young inequality mapping into $\textup{L}^4(\T^d)$, namely
\[ \|\widehat{f}\|_{\textup{L}^4(\T^d)} \leq \|f\|_{\ell^p(\{0,1,2\}^d)}, \]
can be evaluated numerically in Mathematica \cite{Mathematica}. It turns out to be a somewhat arbitrary number,
\[ p = 1.4702039297\ldots, \]
not recognized in any numerical databases; compare with a comment in \cite[Section 1]{BCK24}.
On the other hand, there seems to be ongoing interest in estimates for functions supported in the binary cube $\{0,1\}^d$; see the recent papers \cite{G22,BIKM24}.

%%%%%

\section{Justification of Figure \ref{fig:dimension_free}}
In order to justify the depiction of the range of exponents \eqref{eq:improvedrange} in Figure \ref{fig:dimension_free} we study the behavior of the endpoint assignment 
\[ [2,\infty)\to(0,\infty), \quad q\mapsto p=p(q) \] 
given by
\begin{equation}\label{eq:pqequality}
p = \frac{q}{\log_2\binom{q}{q/2}}
= \frac{q}{\log_2\frac{\Gamma(q+1)}{\Gamma(q/2+1)^2}}.
\end{equation}

First, we claim that $q\mapsto p$ is a strictly decreasing function. Let us show that
\[ q \mapsto \frac{1}{q} \log_2 \frac{\Gamma(q+1)}{\Gamma(q/2+1)^2} \]
is strictly increasing on the whole interval $(0,\infty)$.
We can write it as $q\mapsto \varphi(q)/(q\ln 2)$, where
\[ \varphi\colon[0,\infty)\to\R, \quad \varphi(q) := \log_2 \Gamma(q+1) - 2 \log_2 \Gamma\Big(\frac{q}{2}+1\Big), \]
so it is enough to verify
\[ 0 < \frac{\textup{d}}{\textup{d}q} \frac{\varphi(q)}{q}
= \frac{1}{q} \Big(\varphi'(q)-\frac{\varphi(q)}{q}\Big). \]
Finally, it is sufficient to see that $\varphi''>0$, since then for every $q>0$ the mean value theorem gives a $\xi\in(0,q)$ such that
\[ \frac{\varphi(q)}{q} = \frac{\varphi(q)-\varphi(0)}{q-0} = \varphi'(\xi) < \varphi'(q). \]
Now, the second derivative of $\varphi$ can be expressed in terms of the trigamma function \cite[\S 5.15]{NIST},
\[ \psi'(z) := \Big(\frac{\textup{d}}{\textup{d}z}\Big)^2 \ln \Gamma(z) \]
as
\[ \varphi''(t) = \frac{1}{\ln 2}\psi'(t+1) - \frac{1}{2\ln 2}\psi'\Big(\frac{t}{2}+1\Big), \] 
and so the multiplication theorem \cite[Formula 5.15.7]{NIST} specialized as
\[ 4 \psi'(2z) = \psi'(z) + \psi'\Big(z+\frac{1}{2}\Big) \]
gives
\[ \varphi''(t) = \frac{1}{4\ln 2} \bigg( \psi'\Big(\frac{t}{2}+\frac{1}{2}\Big) - \psi'\Big(\frac{t}{2}+1\Big) \bigg). \]
This is positive for every $t>0$ because $\psi'$ is strictly decreasing on $(0,\infty)$; see for instance \cite[Formula 5.15.1]{NIST}.

Since $2$ is mapped to
\[ \frac{2}{\log_2 \binom{2}{1}} = 2, \]
the previous monotonicity observation easily implies $p\in(1,2]$.

Next, let us see that the same part of the boundary of the exponent range \eqref{eq:improvedrange}, apart from the endpoint, belongs to the sub-H\"{o}lder regime, i.e.
\begin{equation}\label{eq:subduality}
\frac{1}{p} + \frac{1}{q} < 1.
\end{equation}
To see that this holds for $q\in(2,\infty)$ and $p$ as in \eqref{eq:pqequality}, we rewrite the inequality as $\phi(q)>0$ for
\[ \phi\colon[2,\infty)\to\R, \quad \phi(t) := t - 1 - \varphi(t). \]
Since $\phi''=-\varphi''<0$, the function $\phi$ is strictly concave on $[2,\infty)$, so the claim will follow from
\[ \phi(2)=0, \quad \lim_{t\to\infty} \phi(t) = \infty. \]
To verify this, from logarithmic Stirling's formula \cite[Formula 5.11.1]{NIST} we get
\[ \log_2 \Gamma(z+1) = z\log_2 z - z \log_2 e + \frac{1}{2}\log_2 z + O(1), \quad\text{as } \R\ni z\to\infty, \]
so 
\[ \phi(t) = \frac{1}{2} \log_2 t + O(1),\quad\text{as } t\to\infty. \]

In particular, we now also conclude that the region in the right half of Figure \ref{fig:dimension_free} is strictly larger than the one in the left half of the same figure.

The above properties of the range \eqref{eq:improvedrange} are all we will need in the proofs below. Once we establish Theorem \ref{thm:main}, interpolation will also imply that the region of exponents is convex in the $(1/p,1/q)$-plane.

%%%%%

\section{Proof of Theorem \ref{thm:main} assuming Lemma \ref{lm:twopoint}}
\label{sec:proofthm}

\subsection{Necessity of the condition on \texorpdfstring{$p,q$}{p,q}}
We only need to assume that \eqref{eq:improvedHY} holds in dimension $d=1$, for every function $f$ supported in $\{0,1\}$.

First, by choosing
\[ f(0)=f(1)=1 \]
one obtains
\begin{equation}\label{eq:auxnecess}
\Big( \int_0^1 |1+e^{-2\pi i t}|^q \,\textup{d}t \Big)^{1/q} \leq 2^{1/p}
\end{equation}
whenever $q<\infty$.
The left hand side simplifies as
\[ \Big( \frac{2^{q+1}}{\pi} \int_0^{\pi/2} \cos^q t \,\textup{d}t \Big)^{1/q} \]
and then formula
\[ \int_0^{\pi/2} \cos^q t \,\textup{d}t = \frac{\sqrt{\pi}\,\Gamma((q+1)/2)}{2\,\Gamma(q/2+1)} \]
from \cite[Formula 5.12.2]{NIST} and the gamma-function duplication formula \cite[Formula 5.5.5]{NIST}
\[ \Gamma(q+1) = \frac{2^q}{\sqrt{\pi}} \Gamma\Big(\frac{q+1}{2}\Big) \Gamma\Big(\frac{q}{2}+1\Big) \]
transform \eqref{eq:auxnecess} into
\begin{equation}\label{eq:auxnecess1}
\frac{1}{p} \geq \frac{1}{q}\log_2\frac{\Gamma(q+1)}{\Gamma(q/2+1)^2}.
\end{equation}

The same choice of $f$ works for $q=\infty$ and leads to $2\leq 2^{1/p}$, so
\begin{equation}\label{eq:auxnecess2}
q=\infty \ \implies\ p\leq 1.
\end{equation}

Finally, let us also substitute
\[ f(0)=1, \quad f(1)=\varepsilon, \quad \varepsilon>0 \]
into \eqref{eq:improvedHY} and observe
\[ |1+\varepsilon e^{-2\pi i t}|^q = 1 + q\varepsilon \cos 2\pi t + \frac{q}{2} \varepsilon^2 \big(1+(q-2)\cos^2 2\pi t\big) + o(\varepsilon^2) \]
as $\varepsilon\to0^+$, uniformly in $t\in\R$.
Thus
\[ \Big( \int_0^1 |1+\varepsilon e^{-2\pi i t}|^q \,\textup{d}t \Big)^{1/q} 
= \Big( 1 + \frac{q^2}{4} \varepsilon^2 + o(\varepsilon^2) \Big)^{1/q}
= 1 + \frac{q}{4} \varepsilon^2 + o(\varepsilon^2) \]
for $q<\infty$ and
\[ (1+\varepsilon^p)^{1/p} = 1 + \frac{1}{p} \varepsilon^p + o(\varepsilon^p) \]
for $p<\infty$, both as $\varepsilon\to0^+$.
That way \eqref{eq:improvedHY} implies 
\[ \frac{q}{4} \varepsilon^2 + o(\varepsilon^2) \leq \frac{1}{p} \varepsilon^p + o(\varepsilon^p) \quad\text{as } \varepsilon\to0^+, \]
which is only possible when 
\begin{equation}\label{eq:auxnecess3}
p\leq 2.
\end{equation}

Note that \eqref{eq:auxnecess1}, \eqref{eq:auxnecess2}, and \eqref{eq:auxnecess3} together complete the proof of \eqref{eq:improvedrange} for all $q\in[1,\infty]$.

\subsection{Sufficiency of the condition on \texorpdfstring{$p,q$}{p,q}}
Suppose that the coefficients $p$ and $q$ satisfy \eqref{eq:improvedrange}. We can freely assume that $q\in(2,\infty)$ and $p\in(1,2)$, since for other exponents there is nothing to prove beyond the ordinary Hausdorff--Young inequality.  

Any two complex numbers $a,b$ can be written as
\[ a=\alpha e^{2\pi i \theta}, \quad b=\beta e^{2\pi i \vartheta} \]
for some $\alpha,\beta\in[0,\infty)$ and $\theta,\vartheta\in\R$.
Then 
\[ \int_0^1 |a + b e^{-2\pi i t}|^q \,\textup{d}t 
= \int_0^1 |\alpha e^{2\pi i (t+\theta-\vartheta)} + \beta|^q \,\textup{d}t
= \int_0^1 |\alpha e^{2\pi i t} + \beta|^q \,\textup{d}t, \]
so Lemma \ref{lm:twopoint} gives
\begin{equation}\label{eq:twopointineqcon} 
\Big( \int_0^1 |a + b e^{-2\pi i t}|^q \,\textup{d}t \Big)^{1/q} \leq (|a|^p + |b|^p)^{1/p} \quad \text{for } a,b\in\mathbb{C}.
\end{equation}

We are proving \eqref{eq:improvedHY} by the mathematical induction on the dimension $d\in\N$ and the basis $d=1$ becomes precisely \eqref{eq:twopointineqcon} after the substitution $a=f(0)$, $b=f(1)$.
For the induction step take some integer $d\geq 2$ and a function $f\colon\Z^d\to\mathbb{C}$ supported in $\{0,1\}^d$.
Define $f_0,f_1\colon\Z^{d-1}\to\mathbb{C}$ by
\[ f_0(x) := f(\underbrace{x}_{\in\Z^{d-1}},\underbrace{0}_{\in\Z}), \quad f_1(x) := f(\underbrace{x}_{\in\Z^{d-1}},\underbrace{1}_{\in\Z}) \]
for every $x\in\Z^{d-1}$, so that 
\[ \widehat{f}(\xi,t) = \widehat{f_0}(\xi) + \widehat{f_1}(\xi) e^{-2\pi i t} \]
for $(\xi,t)\in\T^{d-1}\times\T$.
Then \eqref{eq:twopointineqcon} and Minkowski's inequality (thanks to $p<q$) give
\begin{align*}
\|\widehat{f}\|_{\textup{L}^q(\T^d)}
& = \bigg( \int_{\T^{d-1}} \int_{\T} \big| \widehat{f_0}(\xi) + \widehat{f_1}(\xi) e^{-2\pi i t} \big|^q \,\textup{d}t \,\textup{d}\xi \bigg)^{1/q} \\
& \!\!\stackrel{\eqref{eq:twopointineqcon}}{\leq}
\bigg( \int_{\T^{d-1}} \bigg( |\widehat{f_0}(\xi)|^p + |\widehat{f_1}(\xi)|^p \bigg)^{q/p} \,\textup{d}\xi \bigg)^{1/q} \\
& \leq \big( \|\widehat{f_0}\|_{\textup{L}^q(\T^{d-1})}^p + \|\widehat{f_1}\|_{\textup{L}^q(\T^{d-1})}^p \big)^{1/p}.
\end{align*}
Applying the induction hypothesis to $f_0$ and $f_1$,
\[ \|\widehat{f_0}\|_{\textup{L}^q(\T^{d-1})} \leq \|f_0\|_{\ell^p(\{0,1\}^{d-1})}, \quad
\|\widehat{f_1}\|_{\textup{L}^q(\T^{d-1})} \leq \|f_1\|_{\ell^p(\{0,1\}^{d-1})}, \]
we finally deduce 
\[ \|\widehat{f}\|_{\textup{L}^q(\T^d)} 
\leq \big( \|f_0\|_{\ell^p(\{0,1\}^{d-1})}^p + \|f_1\|_{\ell^p(\{0,1\}^{d-1})}^p \big)^{1/p}
= \|f\|_{\ell^p(\{0,1\}^d)} \]
and thus complete the induction. 

%%%%%

\section{Proof of Lemma \ref{lm:twopoint}}

Take some $q\in(2,\infty)$ and let $p=p(q)\in(1,2)$ be defined as \eqref{eq:pqequality}, so that \eqref{eq:improvedrange} is satisfied with an equality. 
It is sufficient to establish \eqref{eq:twopointineq} only for this value of $p$, since lowering $p$ only increases the right hand side.

Motivated by a computation from the introductory section, we substitute $a=\alpha^p$ and $b=\beta^p$, so that \eqref{eq:twopointineq} becomes
\[ \int_0^1 |a^{1/p} e^{2\pi i t} + b^{1/p}|^q \,\textup{d}t \leq (a + b)^{q/p}. \]
Both sides of the above inequality are homogeneous of degree $q/p$ in $(a,b)$, so we can simply just define $F_q\colon[0,1]\to(0,\infty)$ as
\begin{align}
F_q(x) & := \int_0^1 \big|x^{1/p} e^{2\pi i t} + (1-x)^{1/p}\big|^q \,\textup{d}t \nonumber \\
& \, = \int_{0}^{1} \big( (1-x)^{2/p} + x^{2/p} + 2 (1-x)^{1/p} x^{1/p} \cos 2\pi t \big)^{q/2} \,\textup{d}t \label{eq:fformula1}
\end{align}
and prove
\begin{equation}\label{eq:mainineq}
F_q(x) \leq 1 \quad \text{for every } x\in[0,1];
\end{equation}
see Figure \ref{fig:funct2D} for an illustration of the particular case $q=4$.

\begin{figure}
\includegraphics[width=0.55\linewidth]{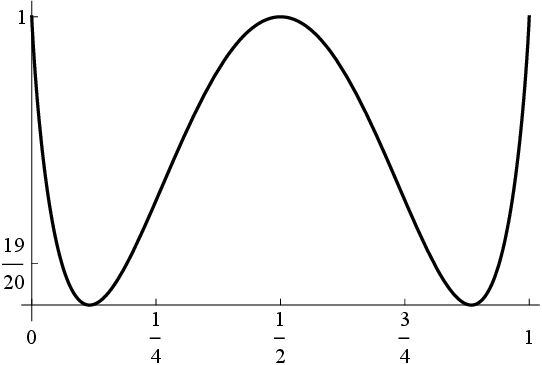}
\caption{Graph of $F_4$. The maximum $1$ is attained at $0$, $1/2$, and $1$.}
\label{fig:funct2D}
\end{figure}

The following lemma is a direct generalization of \cite[Lemma 5]{Kov23}, but a different proof is needed, since $F_q$ no longer has an expansion into a finite sum.

\begin{lemma}\label{lm:ode}
The function $F_q$ satisfies the second-order ordinary differential equation
\begin{equation}\label{eq:theODE}
a_q(x) F_q''(x) + b_q(x) F_q'(x) + c_q(x) F_q(x) = 0 \quad \text{for } x\in\Big(0,\frac{1}{2}\Big), 
\end{equation}
where $a_q,b_q,c_q\colon(0,1/2)\to\mathbb{R}$ are defined as
\begin{align*}
a_q(x) & := p^2 (1-x)^2 x^2 \big((1-x)^{2/p}-x^{2/p}\big), \\
b_q(x) & := p (1-x) x \Big((1-x)^{2/p}\big(p (1-2x) + 2 q x\big) + x^{2/p}\big(p (2x-1) + 2q(1-x)\big)\Big), \\
c_q(x) & := q (1-x)^{2/p} x \big(p(1-x) + q x\big) - q  x^{2/p}(1-x) \big(p x + q(1-x)\big).
\end{align*}
\end{lemma}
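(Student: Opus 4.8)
The plan is to derive the ODE \eqref{eq:theODE} directly from the integral representation \eqref{eq:fformula1} of $F_q$, by differentiating under the integral sign and then recognizing that the resulting integrand can be written as an exact derivative in the variable $t$. Write $u(x) := (1-x)^{1/p}$, $v(x) := x^{1/p}$, and for fixed $t$ set
\[ G(x,t) := u(x)^2 + v(x)^2 + 2u(x)v(x)\cos 2\pi t, \]
so that $F_q(x) = \int_0^1 G(x,t)^{q/2}\,\textup{d}t$. Then
\[ F_q'(x) = \frac{q}{2}\int_0^1 G^{q/2-1} G_x\,\textup{d}t, \qquad F_q''(x) = \frac{q}{2}\int_0^1 G^{q/2-2}\Big( \tfrac{q}{2}-1\Big)G_x^2\,\textup{d}t + \frac{q}{2}\int_0^1 G^{q/2-1}G_{xx}\,\textup{d}t, \]
where $G_x, G_{xx}$ are explicit polynomials in $u,v,u',v',\cos 2\pi t$ (and, after differentiating again, $u'',v''$), using $u' = -\tfrac1p(1-x)^{1/p-1}$, $v' = \tfrac1p x^{1/p-1}$ and similar second derivatives. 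The key algebraic fact to exploit is that $G$ is an affine function of $\cos 2\pi t$: $G = P + Q\cos 2\pi t$ with $P := u^2+v^2$, $Q := 2uv$ depending only on $x$. Consequently every quantity appearing above is a polynomial in $\cos 2\pi t$ of low degree times a power $G^{q/2-2}$, $G^{q/2-1}$, or $G^{q/2}$.

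The heart of the argument is to find polynomials $a_q(x), b_q(x), c_q(x)$ (independent of $t$) such that the combination $a_q F_q'' + b_q F_q' + c_q F_q$ has integrand equal to $\tfrac{\textup{d}}{\textup{d}t}\big[ R(x,t)\, G(x,t)^{q/2-1}\big]$ for a suitable function $R(x,t)$ that is a polynomial in $\sin 2\pi t$, $\cos 2\pi t$ with coefficients depending on $x$; the most natural guess, motivated by the structure $G_t = -2\pi Q \sin 2\pi t$, is $R(x,t) = \lambda(x)\sin 2\pi t$ for a single unknown $\lambda(x)$. Computing $\tfrac{\textup{d}}{\textup{d}t}\big[\lambda \sin 2\pi t \cdot G^{q/2-1}\big] = 2\pi\lambda\cos 2\pi t\, G^{q/2-1} + \lambda\sin 2\pi t\cdot(\tfrac q2-1)G^{q/2-2}G_t$ and using $G_t = -2\pi Q\sin 2\pi t$ and $\sin^2 2\pi t = 1-\cos^2 2\pi t$, one reduces everything to an identity among polynomials in the single variable $s := \cos 2\pi t$, of the form
\[ a_q\Big[\big(\tfrac q2-1\big)\tfrac q2 G_x^2 G^{q/2-2} + \tfrac q2 G_{xx}G^{q/2-1}\Big] + b_q\,\tfrac q2 G_x G^{q/2-1} + c_q G^{q/2} \;=\; \frac{\textup{d}}{\textup{d}t}\big[\lambda\sin 2\pi t\, G^{q/2-1}\big]. \]
Matching the coefficients of $G^{q/2-2}$, $G^{q/2-1}$, $G^{q/2}$ — each of which, after substituting $G = P+Qs$, becomes a polynomial identity in $s$ — yields a linear system for $a_q, b_q, c_q, \lambda$ (up to the expected one-parameter scaling freedom, which is fixed by the stated normalization of $a_q$). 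Solving it produces exactly the coefficients listed in the lemma, with $\lambda$ some explicit rational combination; the integral of the exact $t$-derivative over $[0,1]$ vanishes since $\sin 2\pi t$ and $G$ are $1$-periodic, giving \eqref{eq:theODE}.

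I expect the main obstacle to be purely computational: verifying that the polynomial identity in $s$ actually holds with the stated $a_q, b_q, c_q$. This involves expanding products of the form $(P+Qs)^k$ against polynomials of degree up to roughly $4$ in $s$, with coefficients that are themselves messy expressions in $x^{1/p}$ and $(1-x)^{1/p}$ (note that $P^2 - Q^2 = (u^2-v^2)^2 = ((1-x)^{2/p}-x^{2/p})^2$, which explains the factor $(1-x)^{2/p}-x^{2/p}$ appearing in $a_q$), and then collecting like powers of $s$ and checking each coefficient vanishes. This is the step the authors flag as ``practically unverifiable without an assistance of a computer,'' and I would likewise carry it out with a computer algebra system: set up $G, G_x, G_{xx}$ symbolically, form the left-hand side with the claimed $a_q,b_q,c_q$, subtract $\tfrac{\textup{d}}{\textup{d}t}[\lambda\sin 2\pi t\, G^{q/2-1}]$ with $\lambda$ determined from the highest-degree coefficient match, and confirm the result simplifies to $0$ identically in $x$, $s$, and $q$. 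One should also note at the outset that on $(0,1/2)$ we have $(1-x)^{2/p} > x^{2/p}$, so $a_q(x) \neq 0$ there and the ODE is genuinely second-order (non-degenerate), which is what will be needed when the ODE is subsequently used to prove \eqref{eq:mainineq}.
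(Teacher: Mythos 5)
Your proposal is correct but takes a genuinely different route from the paper. The paper first rewrites $F_q$ as $F_q(x) = \big((1-x)^{2/p}-x^{2/p}\big)^{q/2}\, P_{q/2}(\varphi(x))$ using the Laplace integral representation of the Legendre function, then invokes the \emph{known} Legendre ODE to eliminate $P_{q/2}''$ and hands the resulting algebraic verification to Mathematica. You instead work directly with the integral $F_q(x) = \int_0^1 G(x,t)^{q/2}\,\textup{d}t$, $G = P + Q\cos 2\pi t$, and aim to exhibit the integrand of $a_q F_q'' + b_q F_q' + c_q F_q$ as an exact $t$-derivative $\frac{\textup{d}}{\textup{d}t}\big[\lambda(x)\sin 2\pi t\, G^{q/2-1}\big]$, which vanishes upon integration over a period. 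This is in effect the classical Laplace-integral derivation of the Legendre ODE, transported through the change of variables, so at bottom the two proofs encode the same algebraic identity — but yours is self-contained and does not require knowing (or trusting a CAS to know) the Legendre ODE and its derivative/recurrence relations, at the cost of manipulating a somewhat larger polynomial identity in $s = \cos 2\pi t$. Two small points worth making explicit in a write-up: (i) the degree count works out — after dividing by $G^{q/2-2}$ both sides are quadratic polynomials in $s$, giving three coefficient equations with the single unknown $\lambda$ once $a_q, b_q, c_q$ are fixed, so one determines $\lambda$ from one coefficient and \emph{verifies} the other two; and (ii) differentiation under the integral sign and the vanishing of the boundary term are legitimate because $G(x,t) \geq ((1-x)^{1/p}-x^{1/p})^2 > 0$ on $(0,1/2)\times[0,1]$, so $G^{q/2-1}$ is smooth and $1$-periodic in $t$. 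Both proofs ultimately delegate the final polynomial check to a computer, which the paper flags and you acknowledge, so the level of rigor is comparable.
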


\begin{proof}
The Legendre function $P_\nu$ of degree $\nu$ has the integral representation
\begin{equation}\label{eq:fformula2}
P_{\nu}(z) = \int_{0}^{1} \big( z + \sqrt{z^2-1} \cos(2\pi t)\big)^{\nu} \,\textup{d} t 
\end{equation}
for $z\in(1,\infty)$; see \cite[Formula 14.12.7]{NIST}.
It also satisfies the differential equation
\begin{equation}\label{eq:LegendreODE}
(1-z^2) P_{\nu}''(z) - 2 z P_{\nu}'(z) + {\nu}({\nu}+1) P_{\nu}(z) = 0 
\end{equation}
for $z>1$; see \cite[Formula 14.2.1]{NIST}.

Denoting
\[ \varphi(x) := \frac{(1-x)^{2/p}+x^{2/p}}{(1-x)^{2/p}-x^{2/p}},  \]
we can compare \eqref{eq:fformula1} and \eqref{eq:fformula2} to derive
\begin{equation}\label{eq:fqexpressed}
F_q(x) = \big((1-x)^{2/p}-x^{2/p}\big)^{q/2} P_{q/2}(\varphi(x)).
\end{equation}
Then we can finish the proof using symbolic computation in Mathematica as follows. First, direct symbolic differentiation expresses
\begin{itemize}
\item $F'_q(x)$ in terms of $P_{q/2}(\varphi(x))$ and $P'_{q/2}(\varphi(x))$, 
\item $F''_q(x)$ in terms of $P_{q/2}(\varphi(x))$, $P'_{q/2}(\varphi(x))$, and $P''_{q/2}(\varphi(x))$.
\end{itemize}
Next, we use the Legendre ODE \eqref{eq:LegendreODE} to write
\begin{itemize}
\item $P''_{q/2}(\varphi(x))$ in terms of $P_{q/2}(\varphi(x))$ and $P'_{q/2}(\varphi(x))$,
\end{itemize}
and plug all these into the left hand side of \eqref{eq:theODE}.
Mathematica command \verb+Simplify+ perfoms algebraic manipulation sufficient to show that this is in fact identically $0$.

Alternatively, we can start with $F_q$ expressed as in \eqref{eq:fqexpressed} and immediately apply Mathematica command \verb+FullSimplify+ to the left hand side of \eqref{eq:theODE}. Once again we end up with $0$, but this fully automated verification does not reveal what properties of the Legendre functions were used during the simplification. It turns out that Mathematica first expresses their derivatives in terms of Legendre functions of varying degrees \cite[Formula 14.10.5]{NIST} and then relates the latter via the recurrent relation \cite[Formula 14.10.3]{NIST}.
\end{proof}

%\begin{lemma}\label{lm:sign}
%The coefficient $a_q$ is strictly positive on $(0,1/2)$.
%The coefficient $c_q$ has a unique zero in $(0,1/2)$ and it changes sign precisely once on $(0,1/2)$.
%\end{lemma}

Before we begin the proof of \eqref{eq:mainineq}, let us observe the behavior of $F_q$ at $x=0,1/2$.

\begin{lemma}\label{lm:perturbative}
We have
\begin{align*}
F_q(0) = 1, & \qquad F_q(\varepsilon) = 1 - \frac{q}{p} \varepsilon + o(\varepsilon) \quad \text{as } \varepsilon\to0^+, \\
F_q\Big(\frac{1}{2}\Big) = 1, & \qquad F_q\Big(\frac{1}{2}-\varepsilon\Big) = 1 - \frac{2q^2}{p(q-1)} \Big(\underbrace{1-\frac{1}{p}-\frac{1}{q}}_{>0}\Big) \varepsilon^2 + o(\varepsilon^2) \quad \text{as } \varepsilon\to0^+.
\end{align*}
\end{lemma}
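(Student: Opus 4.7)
The plan is to derive all four assertions by Taylor expansion of the integrand
\[ g(x,t) := (1-x)^{2/p} + x^{2/p} + 2(1-x)^{1/p}x^{1/p}\cos 2\pi t \]
in \eqref{eq:fformula1}. The two base values come essentially for free: $g(0,t)\equiv 1$ gives $F_q(0)=1$, while $g(1/2,t)=2^{2-2/p}\cos^2\pi t$ gives
\[ F_q(1/2) = 2^{q-q/p}\int_0^1 |\cos\pi t|^q\,\textup{d}t. \]
The same beta-integral computation as in the necessity proof of Theorem \ref{thm:main} yields $\int_0^1 |\cos\pi t|^q\,\textup{d}t = 2^{-q}\binom{q}{q/2}$, and combining this with the endpoint identity \eqref{eq:pqequality}, equivalently $\binom{q}{q/2}=2^{q/p}$, collapses to $F_q(1/2)=1$.

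For the expansion near $0$ I would write $g(\varepsilon,t) = A(\varepsilon)+B(\varepsilon)\cos 2\pi t$ with $A(\varepsilon) = 1 - 2\varepsilon/p + O(\varepsilon^{2/p})$ and $B(\varepsilon) = 2\varepsilon^{1/p}(1+O(\varepsilon))$, using that $p<2$ forces $\varepsilon^{2/p}=o(\varepsilon)$. The binomial expansion $(1+s\cos 2\pi t)^{q/2} = 1+(q/2)s\cos 2\pi t + O(s^2)$ with $s=B/A$, combined with $\int_0^1\cos 2\pi t\,\textup{d}t = 0$, gives $F_q(\varepsilon) = A^{q/2}(1+O((B/A)^2)) = 1 - (q/p)\varepsilon + o(\varepsilon)$, since $(B/A)^2=O(\varepsilon^{2/p})=o(\varepsilon)$.

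For the expansion near $1/2$, note first that $g$ is manifestly invariant under $x\leftrightarrow 1-x$, forcing $F_q(1-x)=F_q(x)$ and hence $F'_q(1/2)=0$; thus only $F''_q(1/2)$ must be computed. Differentiating $F_q$ twice at $x=1/2$ and using $g_x(1/2,t)\equiv 0$ (again by the symmetry), the cross-derivative term drops out and
\[ F''_q(1/2) = \frac{q}{2}\int_0^1 g(1/2,t)^{q/2-1}\,g_{xx}(1/2,t)\,\textup{d}t. \]
A Taylor expansion of $g(1/2-y,t)$ through order $y^2$ via the series for $(1\pm 2y)^{2/p}$ and $(1-4y^2)^{1/p}$ gives $g_{xx}(1/2,t) = (2^{5-2/p}/p^2)(1 - p\cos^2\pi t)$. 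Substituting, using $\int_0^1 |\cos\pi t|^k\,\textup{d}t = 2^{-k}\binom{k}{k/2}$ together with the $\Gamma$-recursion $\binom{q-2}{(q-2)/2} = (q/(4(q-1)))\binom{q}{q/2}$, and invoking the endpoint identity once more, collapses $F''_q(1/2)$ to $-4q^2(1-1/p-1/q)/(p(q-1))$; Taylor's theorem then produces the stated expansion. The only real obstacle is the algebraic bookkeeping in this last step, requiring careful chaining of $\Gamma$-function identities so that the endpoint relation cleanly cancels all prefactors; no new analytic idea is needed.
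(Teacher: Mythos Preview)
Your proposal is correct and follows essentially the same approach as the paper: Taylor-expand the integrand in \eqref{eq:fformula1} at $x=0$ and $x=1/2$, integrate term by term using the beta-integral identity $\int_0^1|\cos\pi t|^k\,\textup{d}t = 2^{-k}\binom{k}{k/2}$, and invoke the endpoint relation $\binom{q}{q/2}=2^{q/p}$ to collapse the constants. The only cosmetic difference is that for the $x=1/2$ case you organize the computation via $F''_q(1/2)$ and the symmetry $g_x(1/2,\cdot)=0$, whereas the paper substitutes $x=1/2-\varepsilon$ directly into the integrand before expanding; these are the same calculation.
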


\begin{proof}
Substitution of $x=\varepsilon$ into the integrand of \eqref{eq:fformula1} gives
\begin{align*}
 \Big( 1 + 2 \varepsilon^{1/p} \cos 2\pi t - \frac{2}{p}\varepsilon + o(\varepsilon) \Big)^{q/2} 
 = 1 + q \varepsilon^{1/p} \cos 2\pi t - \frac{q}{p}\varepsilon + o(\varepsilon) 
\end{align*}
as $\varepsilon\to0^+$, uniformly in $t\in\R$, so integrating over $t\in[0,1]$ we deduce the first asymptotic formula.

Substituting $x=1/2-\varepsilon$ into the same integrand we get
\begin{align*}
& 2^{-q/p} \big( (1+2\varepsilon)^{2/p} + (1-2\varepsilon)^{2/p} + 2 (1-4\varepsilon^2)^{1/p} \cos 2\pi t \big)^{q/2} \\
& = 2^{q/2-q/p} \Big( 1 + \cos 2\pi t + \varepsilon^2 \frac{4}{p^2} (2-p-p\cos 2\pi t) + o(\varepsilon^2) \Big)^{q/2} \\
& = 2^{q/2-q/p} \Big( (1 + \cos 2\pi t)^{q/2} + \varepsilon^2 \frac{2q}{p^2} (1 + \cos 2\pi t)^{q/2-1} (2-p-p\cos 2\pi t) \Big) + o(\varepsilon^2)
\end{align*}
as $\varepsilon\to0^+$, uniformly in $t$ again.
Integrating this over $t\in[0,1]$ and using the same identities as in Section \ref{sec:proofthm}, we end up with
\[ 2^{-q/p} \binom{q}{q/2} - \varepsilon^2 \frac{2q^2}{p(q-1)} \Big(1-\frac{1}{p}-\frac{1}{q}\Big) 2^{-q/p} \binom{q}{q/2}. \]
Finally, the second asymptotic formula follows by recalling \eqref{eq:pqequality} and consequently replacing $\binom{q}{q/2}$ with $2^{q/p}$.

Both computations are trivially also valid for $\varepsilon=0$, revealing the values of $F_q$ at $0$ and $1/2$.
\end{proof}

Now we can finalize the proof of inequality \eqref{eq:mainineq}.
Notice that $F_q$ is symmetric about $1/2$, i.e., $F_q(1-x)=F_q(x)$ for $x\in[0,1]$.
Because of Lemma \ref{lm:perturbative} we know that $F_q$ has strict local maxima at $0$ and $1/2$, the one at $0$ being only one-sided.
Our goal is to show $F_q(x)\leq 1$ whenever $x\in(0,1/2)$. Suppose now, for contradiction, that there exists a point $x_{\max}\in (0,1/2)$ at which $F_q$ attains a local maximum.\footnote{One may further assume $F_q(x_{\max})>1$ in order to achieve the goal, but the argument in fact yields a stronger conclusion: $F_q$ cannot attain a local maximum in $(0,1/2)$ at all.}
Then $F_q$ would also attain local minima at some points $x_{\min,1}\in(0,x_{\max})$ and $x_{\min,2}\in(x_{\max},1/2)$; see Figure \ref{fig:minima}.
Since
\[ F'_q(x_{\min,1}) = F'_q(x_{\max}) = F'_q(x_{\min,2}) = 0 \]
and
\[ F''_q(x_{\min,1}) \geq 0, \quad F''_q(x_{\max}) \leq 0, \quad F''_q(x_{\min,2}) \geq 0, \]
the differential equation from Lemma \ref{lm:ode} and strict positivity of $F_q$ and $a_q$ on $(0,1/2)$ give
\[ c_q(x_{\min,1}) \leq 0, \quad c_q(x_{\max}) \geq 0, \quad c_q(x_{\min,2}) \leq 0. \]
In particular, by the intermediate value theorem, we see that $c_q$ has at least two zeros in the interval $(0,1/2)$.

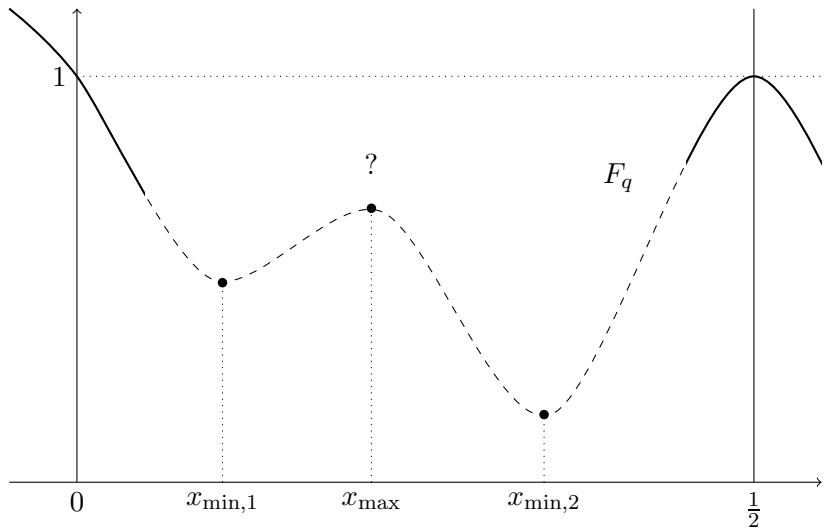
\begin{figure}
\centering
\begin{tikzpicture}[scale=9]

% \begin{scope}
% \clip (-0.2,0.4)--(1.4,0.4)--(1.4,1.3)--(-0.2,1.3);

  % \draw[->] (0,0) -- (1.2,0) node[right] {$x$};
  \draw[->] (0,0.4) -- (0,1.1); %node[above] {$y$};
  \draw (1,0.4)--(1,1.1);
  \draw[dotted](0,1)--(1.1,1);
  \draw[->] (-0.1,0.4)--(1.1,0.4);

\begin{scope}
  \clip (-0.2,0.4)--(1.1,0.4)--(1.1,1.2)--(-0.2,1.2);
  \draw[black, smooth, dashed] plot coordinates {
  (-0.1, 1.1) (0,1) (0.2,0.7) (0.45,0.8) (0.7,0.5) 
  (1,1)
  (1.3,0.5) (1.55,0.8) (1.8,0.7) (2,1) (2.1,1.1) 
  };
\end{scope}

\begin{scope}
  \clip (-0.2,0.4)--(0.1,0.4)--(0.1,1.2)--(-0.2,1.2);
  \draw[thick, black, smooth] plot coordinates {
  (-0.1, 1.1) (0,1) (0.2,0.7) (0.45,0.8) (0.7,0.5) 
  (1,1)
  (1.3,0.5) (1.55,0.8) (1.8,0.7) (2,1) (2.1,1.1) 
  };
\end{scope}

\begin{scope}
  \clip (0.9,0.4)--(1.1,0.4)--(1.1,1.2)--(0.9,1.2);
  \draw[thick, black, smooth] plot coordinates {
  (-0.1, 1.1) (0,1) (0.2,0.7) (0.45,0.8) (0.7,0.5) 
  (1,1)
  (1.3,0.5) (1.55,0.8) (1.8,0.7) (2,1) (2.1,1.1) 
  };
\end{scope}
\node [below] at (0,0.4) {$0$};
\node [left] at (0,1) {$1$};
\node [below] at (1,0.4) {$\frac12$};
% \node at (0.55,0.8) {$?$};
\node at (0.8,0.85) {$\textcolor{black}{F_q}$};
% \end{scope}

\def\x{0.435}
\fill (\x,0.805) circle (0.2pt);  
\draw [dotted](\x,0.4)--(\x,0.805);
\node [below] at (\x,0.4) {$x_{\max}$};

\node at (\x,0.87) {$?$};

\def\y{0.215}
\fill (\y,0.695) circle (0.2pt);
\draw [dotted](\y,0.4)--(\y,0.69);
\node [below] at (\y,0.4) {$x_{\min,1}$};

\def\z{0.69}
\fill (\z,0.5) circle (0.2pt);
\draw [dotted](\z,0.4)--(\z,0.5);
\node [below] at (\z,0.4) {$x_{\min,2}$};

\end{tikzpicture}
\caption{A sketch of the function $F_q$. While the behavior near the points $x=0,1/2$ is well understood, the assumption that $F_q$ also attains its maximum at some point $x_{\max} \in (0,1/2)$ leads to a contradiction.}
\label{fig:minima}
\end{figure}

Recall that the point $(1/p,1/q)$ is at the lower boundary of the range \eqref{eq:improvedrange}, so the two Lebesgue exponents satisfy the sub-H\"{o}lder condition \eqref{eq:subduality}.
Let us substitute $x=1/(y+1)$ for $y\in(1,\infty)$; this change of variables was used in \cite{KT17} and \cite{DGIM21}.
An easy computation gives
\[ c_{q}\Big(\frac{1}{y+1}\Big) = q y (y+1)^{-2/p-2} \big( p y^{2/p} + q y^{2/p-1} -q y - p \big), \]
so, from the previous discussion, we know that $\phi\colon(0,\infty)\to\R$ defined as
\[ \phi(y) := p y^{2/p} + q y^{2/p-1} -q y - p \]
has at least two zeros in $(1,\infty)$.
However, by direct computation, we obtain
\[ \phi'(y) = 2y^{2/p-1} + q \Big(\frac{2}{p}-1\Big) y^{2/p-2} - q \]
and
\[ \phi''(y) = \frac{2(2-p)q}{p} y^{2/p-3} \Big(\frac{y}{q}+\frac{1}{p}-1\Big). \]
Denoting
\[ y_0 := q \Big(1-\frac{1}{p}\Big) \stackrel{\eqref{eq:subduality}}{>} 1, \]
we see that $\phi'$ is strictly decreasing on $(0,y_0]$ and strictly increasing on $[y_0,\infty)$.
Since
\[ \phi'(1) = -2q \Big(1-\frac{1}{p}-\frac{1}{q}\Big) \stackrel{\eqref{eq:subduality}}{<} 0 
\quad\text{and}\quad \lim_{y\to\infty} \phi'(y)=\infty, \]
we see that there exists $y_1\in(y_0,\infty)$ such that $\phi$ is strictly decreasing on $[1,y_1]$ and strictly increasing on $[y_1,\infty)$.
Combining this with $\phi(1)=0$, we conclude that $\phi$ has a unique zero in $(1,\infty)$, which contradicts the previous claim about the existence of two zeros.
This contradiction finally establishes \eqref{eq:mainineq}.

%%%%%

\section{Proof of Corollary \ref{cor:additive}}

The estimate \eqref{eq:additiver} is just a special case of \eqref{eq:improvedHY},
\[ E_{\kappa}(A) \leq \|\mathbbm{1}_A\|_{\ell^p(\Z^d)}^{2\kappa} = |A|^{q/p} = |A|^r, \]
since $r=q/p$, when $q=2\kappa$ and $p$ is chosen as in \eqref{eq:pqequality}.

To see that $A=\{0,1\}^d$ attains the equality, we recall the computation from the beginning of Section \ref{sec:proofthm}, which implies that \eqref{eq:auxnecess} holds with an equality for $p$ given by \eqref{eq:pqequality}.
Now we have, by Fubini's theorem,
\[ E_{\kappa}(\{0,1\}^d) = \Big( \int_0^1 |1+e^{-2\pi i t}|^{2\kappa} \,\textup{d}t \Big)^d = 2^{dq/p} = (2^d)^r = \big|\{0,1\}^d\big|^r. \]

%%%%%

\section{Proof of Corollary \ref{cor:restriction}}

\subsection{Necessity of the condition on \texorpdfstring{$p,q$}{p,q}}
Suppose that \eqref{eq:improvedHY3} holds with a constant $C_{p,q}$ independent of the dimension.
Take $r\in\N$ and $g\in\ell^{q'}(\T^d)$ and apply \eqref{eq:improvedHY3} to the $rd$-dimensional function
\[ G(\xi_1,\xi_2,\ldots,\xi_r) := g(\xi_1) g(\xi_2) \cdots g(\xi_r), \quad \xi_1,\xi_2,\ldots,\xi_r \in \T^d, \]
to obtain
\[ \big\|\widehat{g}\big|_{\{0,1\}^d}\big\|_{\ell^{p'}(\{0,1\}^d)}^r \leq C_{p,q} \|g\|_{\textup{L}^{q'}(\T^d)}^r. \]
Taking the $r$-th roots of both sides and letting $r\to\infty$ we recover \eqref{eq:improvedHY2}, i.e., the constant becomes $1$.
Now the implication \eqref{thmit2}$\implies$\eqref{thmit3} from Theorem \ref{thm:main} confirms that the exponents $p,q$ need to belong to the range \eqref{eq:improvedrange}.

\subsection{Sufficiency of the condition on \texorpdfstring{$p,q$}{p,q}}
This follows from the implication \eqref{thmit3}$\implies$\eqref{thmit2} from Theorem \ref{thm:main} and we immediately also obtain the sharp constant $C_{p,q}=1$.

%%%%%

\section{Proof of Corollary \ref{cor:entropy}}
\label{sec:entropy}

In this section we again assume that $q\in(2,\infty)$ and that $p=p(q)$ is always defined by \eqref{eq:pqequality}.
If we write
\[ q = 2 + \varepsilon, \]
then, using Taylor's expansion of $z\mapsto \log_2 \Gamma(z)$ around $z=2$ and $z=3$ in terms of the digamma function 
\[ \psi(z) := \frac{\textup{d}}{\textup{d}z} \ln \Gamma(z), \]
we obtain
\[ p = \frac{2+\varepsilon}{1+(\psi(3)/\ln 2)\varepsilon-2(\psi(2)/\ln 2)(\varepsilon/2)+o(\varepsilon)} \quad\text{as } \varepsilon\to0^+. \]
Since \cite[Formula 5.5.2]{NIST} specializes to	
\[ \psi(3) - \psi(2) = \frac{1}{2}, \]
we get
\begin{align}
p & = \frac{2 + \varepsilon}{1 + \varepsilon/(2\ln 2) + o(\varepsilon)} \nonumber \\
& = 2 - \Big(\frac{1}{\ln 2} - 1\Big) \varepsilon + o(\varepsilon) \quad\text{as } \varepsilon\to0^+. \label{eq:asymofp}
\end{align}

Take a function $f\colon\Z^d\to\mathbb{C}$ supported in $\{0,1\}^d$ and normalized as in \eqref{eq:normalization}. Denote
\[ S := \{ x\in\Z^d : f(x)\neq 0 \}, \quad T := \{ \xi\in\T^d : \widehat{f}(\xi)\neq 0 \}. \]
Applying \eqref{eq:improvedHY} from Theorem \ref{thm:main} with $p,q$ as above gives
\begin{equation}\label{eq:applofthm}
\Big( \int_T |\widehat{f}(\xi)|^q \,\textup{d}\xi \Big)^{1/q} \leq \Big( \sum_{x\in S} |f(x)|^p \Big)^{1/p}.
\end{equation}
Note that $|\widehat{f}|$ is bounded, so
\[ |\widehat{f}(\xi)|^q = |\widehat{f}(\xi)|^2 + \varepsilon |\widehat{f}(\xi)|^2 \ln |\widehat{f}(\xi)| + o(\varepsilon) \]
as $\varepsilon\to0^+$, uniformly in $\xi\in T$, and then
\[ \Big( \int_T |\widehat{f}(\xi)|^q \,\textup{d}\xi \Big)^{1/q} = 1 + \frac{\varepsilon}{2} \int_T |\widehat{f}(\xi)|^2 \ln |\widehat{f}(\xi)| \,\textup{d}\xi + o(\varepsilon) \quad\text{as }\varepsilon\to0^+, \]
thanks to our normalization
\[ \int_T |\widehat{f}(\xi)|^2 \,\textup{d}\xi = 1. \]
Also,
\[ |f(x)|^p \stackrel{\eqref{eq:asymofp}}{=} |f(x)|^2 - \Big(\frac{1}{\ln 2} - 1\Big) \varepsilon |f(x)|^2 \ln |f(x)| + o(\varepsilon) \quad\text{as }\varepsilon\to0^+, \]
which gives
\[ \Big( \sum_{x\in S} |f(x)|^p \Big)^{1/p} \stackrel{\eqref{eq:normalization}}{=} 1 - \Big(\frac{1}{\ln 2} - 1\Big) \frac{\varepsilon}{2}\sum_{x\in S} |f(x)|^2 \ln |f(x)| + o(\varepsilon) \quad\text{as }\varepsilon\to0^+. \]
Recall inequality \eqref{eq:applofthm}, subtract $1$, divide by $\varepsilon/(4\log_2 e)$, and let $\varepsilon\to0^+$ to obtain
\[ -\textup{H}_{\T^d}(|\widehat{f}|^2) \leq \Big(\frac{1}{\ln 2} - 1\Big) \,\textup{H}_{\Z^d}(|f|^2). \]

The equality will be attained for
\[ f = 2^{-d/2} \mathbbm{1}_{\{0,1\}^d}, \]
when
\[ \textup{H}_{\Z^d}(|f|^2) = - \sum_{x\in\{0,1\}^d} 2^{-d} \log_2 2^{-d} = d \neq 0. \]
Namely,
\[ \widehat{f}(\xi) = 2^{-d/2} \prod_{j=1}^{d} \big(1 + e^{-2\pi i \xi_j}\big) \]
for $\xi=(\xi_1,\xi_2,\ldots,\xi_d)\in\T^d$. Then Fubini's theorem on $\T^d$ and integration by parts enable us to compute
\begin{align*}
-\textup{H}_{\T^d}(|\widehat{f}|^2) & = \int_{\T^d} 2^{d} \Big(\prod_{j=1}^{d} \cos^2 \pi\xi_j\Big) \Big(d + \prod_{j=1}^{d} \log_2 \cos^2 \pi\xi_j\Big) \,\textup{d}\xi \\
& = d + 2d \int_0^1 (\cos^2 \pi t) (\log_2 \cos^2 \pi t) \,\textup{d}t \\
& = d + 2d\Big(\frac{1}{2\ln 2} - 1\Big) = d \Big(\frac{1}{\ln 2} - 1\Big).
\end{align*}
Therefore,
\[ \textup{H}_{\T^d}(|\widehat{f}|^2) + \Big(\frac{1}{\ln 2}-1\Big) \textup{H}_{\Z^d}(|f|^2) = 0, \]
as desired.

%%%%%

\section{Proof of Theorem~\ref{thm:Young} assuming Lemma~\ref{lm:twopointYoung}}

\subsection{Necessity of the condition on \texorpdfstring{$p,q$}{p,q}}
Suppose that $1\leq p,q<\infty$.
We know that \eqref{eq:improvedYoung} holds for one-dimensional functions $f,g\colon\Z\to\mathbb{C}$ such that $f(0)=f(1)=g(0)=g(1)=1$, which vanish outside of $\{0,1\}$.
Since
\[ (f\ast g)(x) = 
\begin{cases}
2 & \text{if } x=1, \\
1 & \text{if } x=0 \text{ or } x=2, \\
0 & \text{otherwise},
\end{cases} \]
the inequality \eqref{eq:improvedYoung} actually gives
\[ (1^q + 2^q + 1^q)^{1/q} \leq 2^{1/p}2^{1/p}, \]
which transforms into \eqref{eq:rangeYoung2}.

\subsection{Sufficiency of the condition on \texorpdfstring{$p,q$}{p,q}}
Take $p\in(1,2)$ and $q\in(1,\infty)$ that satisfy \eqref{eq:rangeYoung2}, as the cases $q=1,\infty$ are trivial.
Replacing $f$ and $g$ with $|f|$ and $|g|$ we see that it is sufficient to work with nonnegative functions only.
Once again we proceed by the mathematical induction on the dimension $d$.
When $d=1$ estimate \eqref{eq:improvedYoung} is just \eqref{eq:twopointineqYoung} with
\[ \alpha_0 = f(0), \ \alpha_1 = f(1), \ \beta_0 = g(0), \ \beta_1 = g(1), \]
which holds in the range \eqref{eq:rangeYoung2} by Lemma \ref{lm:twopointYoung}.

In the induction step we take $d\geq 2$ and functions $f,g\colon\Z^d\to[0,\infty)$ supported in $\{0,1\}^d$.
Define $f_0,f_1,g_0,g_1,\widetilde{f},\widetilde{g}\colon\Z^{d-1}\to\mathbb{C}$ by
\begin{align*}
& f_0(x):=f(x,0), \quad f_1(x):=f(x,1), \quad g_0(x):=g(x,0), \quad g_1(x):=g(x,1), \\
& \widetilde{f}(x):=\big(f_0(x)^p + f_1(x)^p\big)^{1/p}, \quad \widetilde{g}(x):=\big(g_0(x)^p + g_1(x)^p\big)^{1/p} 
\end{align*}
for every $x\in\Z^{d-1}$.
Then $f\ast g$ is supported on $\{0,1,2\}^d$ and for each $x\in\Z^{d-1}$ we have
\begin{align*}
(f\ast g)(x,0) & = \sum_{y\in\Z^{d-1}} f_0(x-y) g_0(y), \\
(f\ast g)(x,1) & = \sum_{y\in\Z^{d-1}} \big( f_0(x-y) g_1(y) + f_1(x-y) g_0(y) \big), \\
(f\ast g)(x,2) & = \sum_{y\in\Z^{d-1}} f_1(x-y) g_1(y).
\end{align*}
The $\ell^q$-norm of $f\ast g$ will be estimated ``slice-wise,'' so first observe that Minkowski's inequality followed by \eqref{eq:twopointineqYoung} with
\[ \alpha_0 = f_0(x-y), \ \alpha_1 = f_1(x-y), \ \beta_0 = g_0(y), \ \beta_1 = g_1(y) \]
gives
\begin{align*}
& \big( (f\ast g)(x,0)^q + (f\ast g)(x,1)^q + (f\ast g)(x,2)^q \big)^{1/q} \\
& \leq \sum_{y\in\Z^{d-1}} \Big( \big( f_0(x-y) g_0(y) \big)^q + \big( f_0(x-y) g_1(y) + f_1(x-y) g_0(y) \big)^q + \big( f_1(x-y) g_1(y) \big)^q \Big)^{1/q} \\
& \stackrel{\eqref{eq:twopointineqYoung}}{\leq} \sum_{y\in\Z^{d-1}} \widetilde{f}(x-y) \widetilde{g}(y) = \big(\widetilde{f} \ast \widetilde{g}\big)(x).
\end{align*}
Finally, we take the $\ell^q$-norm in $x$ and use the induction hypothesis applied to $\widetilde{f}$ and $\widetilde{g}$:
\[ \|f\ast g\|_{\ell^q(\Z^d)} 
\leq \big\| \widetilde{f} \ast \widetilde{g} \big\|_{\ell^q(\Z^{d-1})} 
\leq \big\| \widetilde{f} \big\|_{\ell^p(\Z^{d-1})} \big\| \widetilde{g} \big\|_{\ell^p(\Z^{d-1})}
= \|f\|_{\ell^p(\Z^d)} \|g\|_{\ell^p(\Z^d)}. \]
This completes the induction step.

%%%%%

\section{Proof of Lemma~\ref{lm:twopointYoung}}

For any $q\in(1,\infty)$ we consider the endpoint exponent $p=p(q)$ only, given by
\begin{equation}\label{eq:pqeqYoung}
p = \frac{2q}{\log_2(2^q+2)},
\end{equation}
i.e., $(1/p,1/q)$ is from the upper boundary of the region \eqref{eq:rangeYoung2} depicted in the right half of Figure \ref{fig:Young}. Note
\begin{equation}\label{eq:whatarepandq}
1<p<\min\{q,2\}. 
\end{equation}
Substituting
\[ a_j=\alpha_j^p, \quad b_j=\beta_j^p \]
for $j=0,1$, inequality \eqref{eq:twopointineqYoung} turns into
\[ a_0^{q/p} b_0^{q/p} + \big( a_0^{1/p} b_1^{1/p} + a_1^{1/p} b_0^{1/p} \big)^{q} + a_1^{q/p} b_1^{q/p}
\leq (a_0 + a_1)^{q/p} (b_0 + b_1)^{q/p}. \]
Since both sides are homogeneous functions of order $q/p$ in both $(a_0,a_1)$ and $(b_0,b_1)$, it is enough to show that  $G_q\colon[0,1]^2\to(0,\infty)$ defined as
\[ G_q(x,y) := (1-x)^{q/p} (1-y)^{q/p} + \big( (1-x)^{1/p} y^{1/p} + x^{1/p} (1-y)^{1/p} \big)^{q} + x^{q/p} y^{q/p} \]
satisfies
\begin{equation}\label{eq:mainineqYoung}
G_q(x,y) \leq 1 \quad \text{for every } (x,y)\in[0,1]^2;
\end{equation}
see Figure \ref{fig:funct3D} for an illustration of the particular case $q=2$.
Obviously, $G_q(0,0)=G_q(1,0)=G_q(0,1)=G_q(1,1)=1$, while the choice \eqref{eq:pqeqYoung} guarantees $G_q(1/2,1/2)=1$.
Also observe that
\[ G_q(x,y) = G_q(y,x) = G_q(1-y,1-x) \]
for $(x,y)\in[0,1]^2$, i.e., $G_q$ is symmetric about the lines $y=x$ and $y=1-x$. 

\begin{figure}
\includegraphics[width=0.55\linewidth]{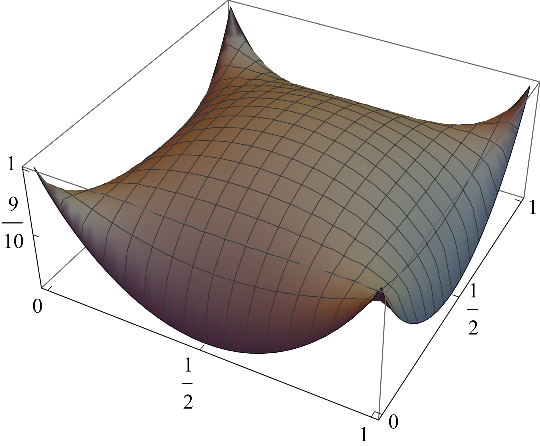}
\caption{Graph of $G_2$. The maximum $1$ is attained at $(0,0)$, $(0,1)$, $(1,0)$, $(1,1)$, and $(1/2,1/2)$.}
\label{fig:funct3D}
\end{figure}

Before we begin the proof of \eqref{eq:mainineqYoung} we need to establish two elementary inequalities.

\begin{lemma}\label{lm:cosh}
For exponents $p,q$ as before and for every $t\in\R$ we have
\begin{equation}\label{eq:thetwoineq}
2^{2q/p} \bigg( \Big(\cosh\frac{pt}{2q}\Big)^{2q/p} - 1 \bigg)
\geq 2^q \bigg( \Big(\cosh\frac{t}{q}\Big)^{q} - 1 \bigg)
\end{equation}
and
\begin{equation}\label{eq:thetwoineq2}
\Big(2\cosh\frac{t}{q}\Big)^{q} > \frac{2(2q-p)}{p} \cosh\frac{pt}{q}.
\end{equation}
\end{lemma}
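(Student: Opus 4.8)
\textbf{Proof plan for Lemma \ref{lm:cosh}.}

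Both inequalities are of the shape ``a convex-looking function of a single variable $t\ge 0$ (using the evenness of $\cosh$, it suffices to treat $t\ge 0$) dominates another such function,'' so the natural strategy is to reduce each to a power-series comparison in $t$ or, where that is too crude, to a monotonicity argument after dividing out the obvious common factor. First I would normalize: both sides of \eqref{eq:thetwoineq} vanish at $t=0$, and both sides of \eqref{eq:thetwoineq2} are positive and equal-order as $t\to 0$ only after one checks the value at $t=0$, namely $2^q$ versus $2(2q-p)/p$; since $p<q$ (by \eqref{eq:whatarepandq}) one has $2(2q-p)/p < 2(2q-p)/p|_{p\to q}=2$, whereas $2^q>2$, so the inequality \eqref{eq:thetwoineq2} is \emph{strict already at $t=0$}, and I expect it to only get better as $t$ grows.

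For \eqref{eq:thetwoineq}: write $s:=pt/(2q)$, so the left side is $2^{2q/p}((\cosh s)^{2q/p}-1)$ and, since $t/q = 2s/p$, the right side is $2^q((\cosh(2s/p))^q-1)$. Set $m:=2q/p>2$ (again by \eqref{eq:whatarepandq}, $m>2$), so I must show $2^m((\cosh s)^m-1)\ge (2^{m/ (\ )}\ldots)$ — more cleanly, after the substitution the claim becomes: for $m>2$,
\[
2^m\big((\cosh s)^m-1\big)\ \ge\ 2^{m\cdot(p/2)\cdot(2/p)}\ldots
\]
so it is cleanest to phrase it as: the function $u\mapsto 2^{m u}\big((\cosh(s/u))^{m u}-1\big)$ — no. The honest route is: expand $(\cosh s)^\lambda - 1$ as a power series in $s^2$ with nonnegative coefficients that are increasing in $\lambda$ for $\lambda\ge 1$ (this follows because $(\cosh s)^\lambda=\exp(\lambda\ln\cosh s)$ and $\ln\cosh s$ has a nonnegative power series in $s^2$, so each coefficient of $(\cosh s)^\lambda-1$ is a polynomial in $\lambda$ with nonnegative coefficients and no constant term, hence increasing in $\lambda\ge 0$). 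Then term-by-term I compare the coefficient of $t^{2n}$ on the two sides: on the left it carries $2^{2q/p}(p/2q)^{2n}\,c_n(2q/p)$ and on the right $2^q(1/q)^{2n}\,c_n(q)$, where $c_n(\lambda)$ is the $n$-th coefficient of $(\cosh s)^\lambda-1$. Because $2q/p>q$ (from $p<2$) we have $c_n(2q/p)\ge c_n(q)$, and it remains the purely elementary inequality $2^{2q/p}(p/2q)^{2n}\ge 2^q(1/q)^{2n}$, i.e.\ $2^{2q/p-q}\ge (2/p)^{2n}$ — but this \emph{fails} for large $n$ since $2/p>1$. So the crude term-by-term comparison is \emph{not} enough and this is exactly the main obstacle.

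The fix, and the real content, is to not split the coefficient comparison so naively: instead I would compare the two sides after writing $\cosh$ in exponential form. Put $v:=e^{pt/(2q)}\ge 1$, so $2\cosh(pt/2q)=v+v^{-1}$ and $2\cosh(t/q)=v^{2/p}+v^{-2/p}$; then \eqref{eq:thetwoineq} becomes, after multiplying through by $2^{2q/p}$ and using $2^{2q/p}=2^m$,
\[
\big(v+v^{-1}\big)^{m}-2^{m}\ \ge\ \big(v^{2/p}+v^{-2/p}\big)^{q}-2^{q},
\]
with $m=2q/p$. Both sides vanish at $v=1$; differentiating in $w:=\ln v\ge 0$, the left derivative is $m(v+v^{-1})^{m-1}(v-v^{-1})$ and the right is $(2q/p)(v^{2/p}+v^{-2/p})^{q-1}(v^{2/p}-v^{-2/p})$, and since $m=2q/p$ the common factor $2q/p$ cancels, reducing everything to comparing $(v+v^{-1})^{m-1}(v-v^{-1})$ with $(v^{2/p}+v^{-2/p})^{q-1}(v^{2/p}-v^{-2/p})$ for $v\ge 1$. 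Here I would use the elementary facts that $v^{2/p}-v^{-2/p}\le (2/p)(v-v^{-1})\cdot(\text{something})$ — more precisely the convexity inequality $\sinh(a\xi)\le$ stuff — and that $v^{2/p}+v^{-2/p}\le (v+v^{-1})^{2/p}$ for $v\ge 1$ and $2/p\ge 1$ (power-mean / superadditivity of $x\mapsto x^{2/p}$), which together pull the right side below the left after matching exponents: $(q-1)\cdot(2/p)+1$ versus $m-1+1=m=2q/p$, and indeed $(q-1)(2/p)+1 = 2q/p - 2/p + 1 \le 2q/p$ since $2/p\ge 1$. So the exponents line up in the right direction, and \eqref{eq:thetwoineq} follows once these two one-variable convexity estimates are checked — each is routine (reduce to $\xi\ge 0$ and compare derivatives at $0$, then use a second derivative sign), so I would relegate them to a short computation or, if they resist, to a computer-assisted verification as done elsewhere in the paper. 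For \eqref{eq:thetwoineq2} the same substitution $v=e^{pt/(2q)}$ turns it into $(v^{2/p}+v^{-2/p})^q > \tfrac{2(2q-p)}{p}\cosh(pt/q)=\tfrac{2q-p}{p}(v+v^{-1})$; at $v=1$ this is $2^q>2(2q-p)/p$, true since $p<q<2$ forces $2(2q-p)/p<2<2^q$; and for $v>1$ the left side grows like $v^{2q/p}$ while the right grows only like $v$, with $2q/p>1$, so after checking the derivative inequality at $v=1$ (which follows from strictness at $v=1$ plus convexity) the strict inequality persists for all $v\ge 1$, hence all $t$. The main obstacle throughout is genuinely the first inequality \eqref{eq:thetwoineq}: the clean term-by-term bound overshoots, and one must instead exploit the exact relation $m=2q/p$ so that the leading factors cancel after differentiation and the remaining exponent count $(q-1)(2/p)+1\le 2q/p$ comes out correctly.
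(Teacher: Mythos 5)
Your plan diverges from the paper's (which fixes $t$ and shows $s\mapsto\Phi(s,t):=2^s((\cosh(t/s))^s-1)$ is increasing in $s$, plus a separate concavity argument when $1<q<2$), and the route you sketch for \eqref{eq:thetwoineq} has a concrete failure. After cancelling the common factor $2q/p$ in the derivative comparison, you need
\[
(v+v^{-1})^{2q/p-1}(v-v^{-1})\ \geq\ (v^{2/p}+v^{-2/p})^{q-1}(v^{2/p}-v^{-2/p}),\qquad v\geq 1.
\]
You propose to bound the right side by $(v+v^{-1})^{(2/p)(q-1)}(v^{2/p}-v^{-2/p})$ via superadditivity of $x\mapsto x^{2/p}$, which is fine, and then to close the gap you would need the second elementary estimate
\[
v^{2/p}-v^{-2/p}\ \leq\ (v+v^{-1})^{2/p-1}(v-v^{-1}).
\]
But this is \emph{false} for $p\in(1,2)$ near $v=1$: linearizing at $v=1$ gives $4/p$ on the left versus $2^{2/p}$ on the right, and with $u=2/p\in(1,2)$ the function $2^u-2u$ is strictly convex, vanishes at $u=1$ and $u=2$, and is therefore strictly negative in between, so $4/p>2^{2/p}$. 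In other words, the very thing you flag as ``the main obstacle'' (the term-by-term comparison overshooting because $2/p>1$) is not actually removed by the exponential-form rewrite; the loss just reappears as the failure of this sub-inequality. The paper avoids this by never splitting the comparison into two separate convexity bounds: it differentiates in the \emph{parameter} $s$ rather than in $t$, which reduces the whole matter to positivity of a single explicit one-variable function $\varphi(u)=(\cosh u)^2(\ln(2\cosh u)-u\tanh u)-\ln 2$, and then handles $1<q<2$ by a further concavity-in-$q$ argument.

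For \eqref{eq:thetwoineq2} there are also two errors. First, with $v=e^{pt/(2q)}$ one has $\cosh(pt/q)=\cosh(2\ln v)=(v^2+v^{-2})/2$, so the right side becomes $\tfrac{2q-p}{p}(v^2+v^{-2})$, not $\tfrac{2q-p}{p}(v+v^{-1})$ as you wrote. Second, the $t=0$ check is argued the wrong way: since $p\mapsto 2(2q-p)/p$ has derivative $-4q/p^2<0$, the assumption $p<q$ gives $2(2q-p)/p>2$, not $<2$. In fact $2^q>2(2q-p)/p$ does hold, but it requires the specific relation \eqref{eq:pqeqYoung}, under which $(2q-p)/p=\log_2(2^{q-1}+1)$ and the claim reduces to $2^{q-1}>\log_2(2^{q-1}+1)$. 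More importantly, the ``left side grows like $v^{2q/p}$ while the right grows like $v^2$'' heuristic does not substitute for a proof: the paper's argument for \eqref{eq:thetwoineq2} genuinely splits into the cases $p^2\leq q$ and $p^2>q$, uses bounds on $p'(q)$ and $p''(q)$, and in the second case requires a Lipschitz estimate together with a finite grid check in Mathematica. So both inequalities need substantially more than the convexity sketch you give.
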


\begin{proof}
Note that \eqref{eq:thetwoineq} becomes an equality for $t=0$, while the hyperbolic cosine is an even function, which enables us to restrict our consideration to $t\in(0,\infty)$.
For the proof of \eqref{eq:thetwoineq} define
\[ \Phi\colon(0,\infty)^2 \to \R, \quad
\Phi(s,t) := 2^s \bigg(\Big(\cosh\frac{t}{s}\Big)^s-1\bigg), \]
so that the estimate can be written simply as
\begin{equation}\label{eq:whatweneed}
\Phi\Big(\frac{2q}{p},t\Big) \geq \Phi(q,t) \quad \text{for } t\in(0,\infty).
\end{equation}
For $s>2$ and $t>0$ differentiation gives 
\[ 2^{-s} \partial_s\Phi(s,t)
= \Big(\cosh\frac{t}{s}\Big)^s \bigg( \ln\Big(2\cosh\frac{t}{s}\Big) - \frac{t}{s} \tanh\frac{t}{s} \bigg) - \ln 2
\geq \varphi\Big(\frac{t}{s}\Big), \]
where $\varphi\colon[0,\infty)\to\R$ is now a concrete one-dimensional function, 
\[ \varphi(u) := (\cosh u)^2 \big( \ln(2\cosh u) - u \tanh u \big) - \ln 2. \]
It is easily seen to be positive on $(0,\infty)$, because
\begin{align*}
& \varphi(0)=0, \quad 
\varphi'(u) = \sinh(2u) \gamma(u), \quad 
\gamma(u) := \ln(2\cosh u)-u\coth(2u), \\ 
& \gamma'(u) = \frac{2u-\sinh(2u)}{(\sinh(2u))^2}<0, \quad 
\lim_{u\to 0^+}\gamma(u)=\ln 2-\frac{1}{2} > 0, \quad 
\lim_{u\to\infty}\gamma(u) =0
\end{align*}
imply $\gamma(u)>0$, $\varphi'(u)>0$, and then also $\varphi(u)>0$ for every $u\in(0,\infty)$.
Thus, $s\mapsto\Phi(s,t)$ is increasing on $[2,\infty)$ for any fixed $t>0$.
Since 
\[ \frac{2q}{p} \stackrel{\eqref{eq:whatarepandq}}{>} \max\{q,2\}, \]
we are done proving \eqref{eq:whatweneed} in the case $q\geq 2$, while in the case $1<q<2$ we still need to show
\[ \Phi(2,t) \geq \Phi(q,t) \quad \text{for } t\in(0,\infty). \]
However,
\[ \Phi(2,t) = 4 \bigg(\Big(\cosh\frac{t}{2}\Big)^2-1\bigg) = 2 (\cosh t - 1) = \Phi(1,t) \]
and it is again natural to substitute $u=t/q$, so it remains to see that 
\[ \eta(q,u) := \Phi(1,qu) - \Phi(q,qu) 
= 2 (\cosh(qu) - 1) - 2^q \big( (\cosh u)^q-1 \big) \]
is always nonnegative when $1<q<2$ and $u>0$. Here we have $\eta(q,0)=0$,
\[ \partial_u \eta(q,u) = 2q\sinh(qu) - 2^q q (\cosh u)^{q-1} \sinh u, \]
and it is sufficient to verify $\partial_u \eta(q,u)\geq 0$ for $u>0$, but this is equivalent to
\begin{equation}\label{eq:thelastphi}
\phi(q,u) := \ln\sinh(qu) - (q-1)\ln(2\cosh u) - \ln\sinh u \geq 0. 
\end{equation}
Finally,
\[ \partial_q^2 \phi(q,u) = - \Big(\frac{u}{\sinh(qu)}\Big)^2 \leq 0, \]
so $q\mapsto\phi(q,u)$ is concave on $[1,2]$ for a fixed $u>0$ and then
\[ \phi(1,u) = 0, \quad \phi(2,u) = \ln\frac{\sinh(2u)}{2\cosh u \sinh u} = 0 \]
imply that \eqref{eq:thelastphi} holds for every $q\in(1,2)$ and $u\in(0,\infty)$, completing the proof of \eqref{eq:thetwoineq}.

For the proof of \eqref{eq:thetwoineq2}, we fix $p,q\in(1,\infty)$ related by \eqref{eq:pqeqYoung}, recalling the right image in Figure \ref{fig:Young}. Note that the assignment $q\mapsto p(q)$ actually makes sense for all $q\in(0,\infty)$ and we can compute
\begin{equation}\label{eq:expprimeeq}
p'(q) = \frac{p}{q} \Big( 1 - \frac{p 2^q}{2(2^q + 2)} \Big), \quad p'(1)=\frac{3}{4}.
\end{equation}
Substitute $u=t/q$, transforming \eqref{eq:thetwoineq2} into
\begin{equation}\label{eq:thetwoineq2b}
\big(2\cosh u\big)^q > \frac{2(2q-p)}{p} \cosh(p u).
\end{equation}
Taking logarithms and defining $\widetilde{\Phi}\colon(0,\infty)\times[0,\infty)\to\R$ as
\[ \widetilde{\Phi}(q,u) := q\ln\cosh u - \ln\cosh(pu) + \ln\frac{p 2^{q-1}}{2q-p}, \]
the desired inequality \eqref{eq:thetwoineq2b} becomes 
$\widetilde{\Phi}(q,u)>0$ for $q>1$ and $u\geq0$.

Let us begin with the case of exponents satisfying $p^2\leq q$.
Observe
\begin{align*} 
\partial_u\widetilde{\Phi}(q,u) & = q \tanh u - p \tanh(pu) \\
& = p (\tanh u) \Big(\frac{q}{p} - \frac{\tanh(pu)}{\tanh u}\Big),
\end{align*}
that $u\mapsto \tanh(pu) / \tanh u$ is strictly decreasing, which follows from
\[ \frac{\textup{d}}{\textup{d}u} \ln\frac{\tanh(pu)}{\tanh u}
= \frac{2p}{\sinh(2pu)} - \frac{2}{\sinh(2u)} < 0, \]
and that
\[ \lim_{u\to0^+}\frac{\tanh(pu)}{\tanh u} = p, \quad \lim_{u\to\infty}\frac{\tanh(pu)}{\tanh u} = 1. \]
Since $q/p\geq p$, we see that $u\mapsto\widetilde{\Phi}(q,u)$ is strictly increasing on $[0,\infty)$, so
\[ \widetilde{\Phi}(q,0) = \ln \frac{2^{q-1}p}{2q-p} \stackrel{\eqref{eq:pqeqYoung}}{=} \ln \frac{2^{q-1}}{\log_2(2^{q-1}+1)} > 0 \]
implies that $\widetilde{\Phi}(q,u)$ is positive for $u>0$ as well, and \eqref{eq:thetwoineq2b} is confirmed.

Now we turn to the case $p^2>q$, which easily implies $q<4$ and this is all that we need in what follows.
Using \eqref{eq:expprimeeq} and differentiating once again we easily estimate\footnote{Reliable optimization of one-dimensional functions (with no other variables or parameters) is never a difficulty for Mathematica \cite{Mathematica}. Also see the very last part of the proof for an example of how (even higher-dimensional) concrete inequalities can be verified $100\%$ rigorously.} 
\begin{equation}\label{eq:expprimeineq}
0 < p'(q) \leq \frac{3p}{4q}, \quad \frac{2p}{5q^2} < -p''(q) < \frac{4p}{5q^2}
\end{equation}
for $1\leq q\leq4$.
Positivity of $\widetilde{\Phi}(q,u)$ for $q>1$ will follow from $\widetilde{\Phi}(1,u)=0$ once we can prove
\begin{equation}\label{eq:weneedPhi}
\partial_q \widetilde{\Phi}(q,u) > 0
\end{equation}
for $q\in(1,4)$ and $u\in[0,\infty)$.
Note that
\[ \partial_q \widetilde{\Phi}(q,u) = \widetilde{\eta}(q,u) + \frac{2}{2q-p}  \Big(p'(q)\frac{q}{p} - 1 \Big) + \ln 2, \]
where
\[ \widetilde{\eta}(q,u) := \ln\cosh u - p'(q) u \tanh (pu). \]
Furthermore,
\[ \partial_q \widetilde{\eta}(q,u) = - p''(q) u \tanh (pu) - p'(q)^2 \frac{u^2}{(\cosh (pu))^2} \stackrel{\eqref{eq:expprimeineq}}{\geq} \frac{1}{q^2}\widetilde{\varphi}(pu), \]
where $\widetilde{\varphi}$ is a concrete one-dimensional function given by
\[ \widetilde{\varphi}(s) := \frac{2}{5}s\tanh s - \frac{s^2}{(\cosh s)^2}. \]
It is easily seen that $\widetilde{\varphi}(s) \geq 0$ for $s\geq3$, which then implies $\partial_q \widetilde{\eta}(q,u)\geq0$ for $1<q<4$ and $u\geq 3$, and thus also
\begin{align*} 
\widetilde{\eta}(q,u) & \geq \widetilde{\eta}(1,u)  
\stackrel{\eqref{eq:expprimeeq}}{=} \ln\cosh u - \frac{3}{4} u \tanh u > 0
\end{align*}
for $q\in(1,4)$ and $u\in[3,\infty)$, by an easy one-dimensional optimization in $u$.
Finally,
\[ \partial_q \widetilde{\Phi}(q,u) \geq - \frac{p 2^q}{(2q-p)(2^q+2)} + \ln 2 \]
and yet another one-dimensional optimization, this time in $1\leq q\leq 4$, shows that this is always strictly positive, confirming \eqref{eq:weneedPhi} for $u\geq3$.

In the last part of the proof we verify \eqref{eq:weneedPhi} when $q\in[1,4]$ and $u\in[0,3]$.
Mathematica can perform numerical optimization of $\partial_q \widetilde{\Phi}(q,u)$ over $(q,u)\in[1,4]\times[0,3]$, computing its minimal value as
\[ 0.0293596409\ldots > \frac{1}{50}, \]
but this might not be completely rigorous. Thus, we rather reduce the desired strict inequality to finitely many single-number inequalities corresponding to nodes of a subdivision of that square.
We compute
\begin{align*}
\partial_q^2 \widetilde{\Phi}(q,u) & = - p''(q) u \tanh(pu) - p'(q)^2 \frac{u^2}{(\cosh(pu))^2} + p''(q) \frac{2q}{p(2q-p)} \\
& \quad - p'(q)^2 \frac{4q(q-p)}{p^2(2q-p)^2} + \frac{4}{(2q-p)^2}\big(1-p'(q)\big), \\
\partial_u \partial_q \widetilde{\Phi}(q,u) & = -p'(q) \tanh(pu) - p'(q) \frac{pu}{(\cosh(pu))^2} + \tanh u.
\end{align*}
Using \eqref{eq:expprimeineq} we easily get
\[ |\partial_q^2 \widetilde{\Phi}(q,u)| \leq 7,
\quad |\partial_u \partial_q \widetilde{\Phi}(q,u)| \leq 3 \]
for $1\leq q\leq 4$ and $0\leq u\leq 3$.
Thus, $(q,u)\mapsto\partial_q \widetilde{\Phi}(q,u)$ is a Lipschitz function on the square $[1,4]\times[0,3]$ with explicit horizontal and vertical Lipschitz constants, so it is sufficient to verify $\partial_q \widetilde{\Phi}(q,u)> 1/50$ on the two-dimensional subdivision
\[ \Big\{ (q,u) = \Big(1+\frac{i}{700},\frac{j}{300}\Big) \,:\, i\in\{0,1,2,\ldots,2100\},\ j\in\{0,1,2,\ldots,900\} \Big\}. \]
This is easily done in Mathematica, using the exact expressions and not just numerical approximations. It completes the proof of \eqref{eq:weneedPhi} and thus also that of \eqref{eq:thetwoineq2b}.
\end{proof}

\begin{lemma}\label{lm:boundary antidiagonal}
It holds that $G_q(x,y)\leq 1$ on the boundary of $[0,1]^2$ and on the anti-diagonal line $y=1-x$. 
\end{lemma}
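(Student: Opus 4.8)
The plan is to split the claim into its two halves and, in each, reduce $G_q\le1$ to something already in hand. For the boundary I would first use the symmetries $G_q(x,y)=G_q(y,x)=G_q(1-y,1-x)$, which permute the four edges of $[0,1]^2$ among themselves, so that it suffices to treat the edge $x=0$. There the last term of $G_q$ vanishes and the middle term collapses to $(y^{1/p})^q=y^{q/p}$, leaving $G_q(0,y)=(1-y)^{q/p}+y^{q/p}$. Since \eqref{eq:whatarepandq} gives $q/p>1$, the elementary bound $s^{q/p}\le s$ for $s\in[0,1]$, applied with $s=y$ and with $s=1-y$, yields $G_q(0,y)\le y+(1-y)=1$, and the remaining three edges follow by symmetry.

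For the anti-diagonal I would substitute $y=1-x$, so that the three summands of $G_q$ collapse and $G_q(x,1-x)=2\bigl(x(1-x)\bigr)^{q/p}+\bigl(x^{2/p}+(1-x)^{2/p}\bigr)^q$. Next I would reparametrize $x=1/(1+r)$ with $r\in(0,\infty)$ --- the excluded endpoints $x=0,1$ being already covered by the boundary case --- and then put $r=e^{2v}$ with $v\in\R$. Writing $m:=2q/p=\log_2(2^q+2)$, so that $q/p=m/2$ and $2/p=m/q$, a direct simplification (using $1+e^{2w}=e^{w}\cdot 2\cosh w$ repeatedly and dividing through by $e^{mv}$) turns $G_q(x,1-x)\le1$ into
\[ (2\cosh v)^m \ge 2 + \bigl(2\cosh(2v/p)\bigr)^q \qquad\text{for all } v\in\R. \]

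Finally, using $(2\cosh v)^m=2^m(\cosh v)^m$, $\bigl(2\cosh(2v/p)\bigr)^q=2^q(\cosh(2v/p))^q$, and the identity $2^m-2^q=2$, this last display is equivalent to $2^m\bigl((\cosh v)^m-1\bigr)\ge 2^q\bigl((\cosh(2v/p))^q-1\bigr)$, which is precisely inequality \eqref{eq:thetwoineq} of Lemma \ref{lm:cosh} after the change of variable $t=mv$ (then $pt/(2q)=v$ and $t/q=2v/p$). Since the genuinely hard estimate \eqref{eq:thetwoineq} is already established, the only real work is the bookkeeping: checking that, on the anti-diagonal, the three summands of $G_q$ combine after the substitutions $x=1/(1+r)$, $r=e^{2v}$ into exactly the combination of hyperbolic cosines controlled by \eqref{eq:thetwoineq}, and that the exponents $m/2$, $m/q$ match $q/p$, $2/p$ through the relation $p=2q/\log_2(2^q+2)$. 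I expect this matching of exponents, rather than any new idea, to be the main (and essentially only) obstacle.
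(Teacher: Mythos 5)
Your proposal is correct and follows essentially the same route as the paper: the boundary reduces by symmetry to the edge $x=0$ and the bound $s^{q/p}\le s$, and the anti-diagonal reduces, after the substitution $x=1/(1+e^{pt/q})$ (your $r=e^{2v}$, $t=mv$ composition is the same change of variables), to inequality \eqref{eq:thetwoineq} of Lemma~\ref{lm:cosh} via the identity $2^{2q/p}-2^q=2$. The bookkeeping of exponents you worried about does indeed check out exactly as you described.
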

\begin{proof}
Regarding the boundary, due to the symmetries of $G_q$ it is sufficient to consider the left side of the square. Substituting $x=0$ into $G_q(x,y)$ and using $q/p>1$ we obtain
\[ G_q(0,y) = (1-y)^{q/p} + y^{q/p} \leq (1-y) + y = 1. \]

Regarding the anti-diagonal of $[0,1]^2$, the goal is to establish
\[ G_q(x,1-x) = \big((1-x)^{2/p}+x^{2/p}\big)^q+2(1-x)^{q/p}x^{q/p}\leq 1 \]
for $x\in(0,1)$. 
Substituting 
\[ x = \frac{1}{1 + e^{pt/q}}, \quad t\in\R, \]
this estimate transforms into
\[ \big( e^{pt/q} + 1 \big)^{2q/p} \geq \big( e^{2t/q} + 1 \big)^{q} + 2 e^t \]
and, dividing by $e^t$, into
\[ \big( e^{pt/(2q)} + e^{-pt/(2q)} \big)^{2q/p} \geq \big( e^{t/q} + e^{-t/q} \big)^q + 2, \]
i.e.,
\[ \Big( 2 \cosh \frac{pt}{2q} \Big)^{2q/p} \geq \Big( 2 \cosh \frac{t}{q} \Big)^q + 2. \]
Finally note that \eqref{eq:pqeqYoung} can be rewritten as
$2^{2q/p} - 2^q = 2$,
so the last inequality is precisely the first of the two inequalities from Lemma \ref{lm:cosh}, namely \eqref{eq:thetwoineq}.
\end{proof}

In the rest of the proof, let us consider points $(x,y)$ in the open lower-right triangle with anti-diagonal removed,
\[ \Delta:=\big\{(x,y)\in (0,1)^2: y\leq x,\, y\not = 1-x\big\}, \]
shaded in Figure \ref{fig:two_max}.
Suppose now, for contradiction, that there exists a point $(x_{\max},y_{\max})\in \Delta$ at which $G_q$ attains a local maximum.\footnote{Again, one may further assume $G_q(x_{\max},y_{\max})>1$ in order to achieve the goal, but the argument in fact yields a stronger conclusion: $G_q$ cannot attain a local maximum in $\Delta$ at all.}
We analyze $G_q$ along curves 
\begin{equation}\label{e:dissecting curve}
\Theta_a := \Big\{ (x,y)\in(0,1)^2 \,:\, \frac{y}{1-y}=a\frac{x}{1-x} \Big\}
\end{equation}
for $0<a\leq 1$.
These curves are easily seen to partition the triangle $0<y\leq x<1$.
Remark \ref{rem:thePDE} will motivate this unusual choice of curves, but they also reflect the structure of $G_q$.
Simplifying the formula, we can write the equation of $\Theta_a$ as
\[ y=\theta_a(x):=\frac{ax}{1-x+ax}. \]

Choose $a\in (0,1]$ so that the curve passes through the point $(x_{\max},y_{\max})\in \Delta$. By the symmetry of both $G_q$ and $\Theta_a$, the reflected point $(x_{\max}',y_{\max}')\in \Delta$, obtained by mirroring across the line $y=1-x$, must also be a local maximum of $G_q$. Therefore, the one-dimensional function $G_q\vert_{\Theta_a}$ defined along the curve also has at least two local maxima in $\Theta_a\cap\Delta$. 
Moreover, by direct computation in Mathematica we see that
\[ \lim_{x\to0^+} \frac{\textup{d}}{\textup{d} x}G_q(x,\theta_a(x)) = -\frac{q}{p}(1+a) < 0. \]
Also note that $y=\theta_a(x)$ intersects $y=1-x$ at the point with $x$-coordinate $1/(1+\sqrt{a})$. 
We want to verify
\begin{equation}\label{eq:centralmaxofGq}
\frac{\textup{d}^2}{\textup{d} x^2}\Big\vert_{x=1/(1+\sqrt{a})} G_q(x,\theta_a(x)) < 0
\end{equation}
and Mathematica can easily compute this second derivative as
\[ \frac{2q}{p} a^{(q-p)/(2p)} \big(1+\sqrt{a}\big)^{2(p-q)/p}
\Big( -\big( a^{-1/(2p)} + a^{1/(2p)} \big)^q + \frac{q}{p} \big( a^{-1/2} + a^{1/2} \big) + \frac{2(q-p)}{p} \Big). \]
Then we perform the substitution
\[ a = e^{-2pt/q}, \quad t\in[0,\infty), \]
to see that \eqref{eq:centralmaxofGq} is equivalent with
\[ - \Big(2\cosh\frac{t}{q}\Big)^q + \frac{2q}{p} \cosh\frac{pt}{q} + \frac{2(q-p)}{p} < 0. \]
However, this is an immediate consequence of \eqref{eq:thetwoineq2} from Lemma \ref{lm:cosh} because $\cosh(pt/q)\geq1$.

All this and the symmetry of $G_q$ about $y=1-x$ imply that $G_q\vert_{\Theta_a}$ is decreasing\footnote{Monotonicity is considered with respect to the natural orientation of $\Theta_a$, from $(0,0)$ to $(1,1)$.} near $(0,0)$, increasing near $(1,1)$, and has yet another local maximum at the intersection of $\Theta_a$ with the anti-diagonal; see Figure~\ref{fig:two_max}. Consequently, $G_q\vert_{\Theta_a}$ must also attain at least four local minima, so, altogether, $G_q\vert_{\Theta_a}$ is stationary at at least seven points of $\Theta_a$ depicted in Figure~\ref{fig:two_max}.
In what follows, we show that such behavior is impossible, even under the most intricate scenario. 

We employ a similar argument to the one used in the proof of inequality \eqref{eq:mainineq}, but carried out at a deeper level: we analyze $x\mapsto G_q(x,\theta_a(x))$ by repeatedly differentiating, factoring the expression, and isolating the nontrivial factor.
We repeat this process until the remaining expression becomes sufficiently simple to analyze.

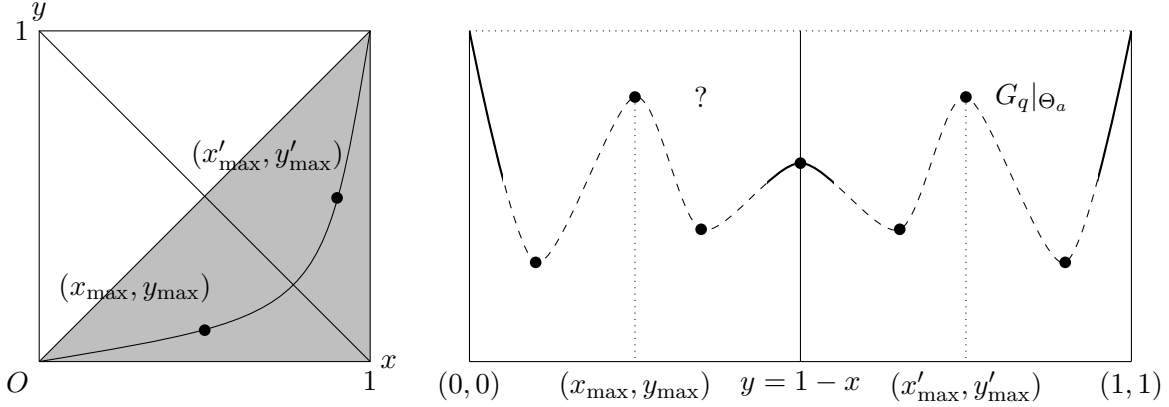
\begin{figure}
\centering

\begin{tikzpicture}[scale=4.4]
 \begin{scope}
\fill [lightgray,opacity=0.2] (0,0)--(1,0)--(1,1);
\draw (0,0)--(0,1)--(1,1)--(1,0)--(0,0);
\draw (0,0)--(1,1);
\draw (1,0)--(0,1);
\draw (0,0) .. controls (6/7,1/7).. (1,1);

\fill (1/2,0.095) circle (0.5pt);
% \draw [shift={(0,-0.405)}](0,0)--(1,1);
\fill (0.9,0.495) circle (0.5pt);

\node [above left] at (0.55,0.15){$(x_{\max},y_{\max})$};
\node [above left] at (0.95,0.55) {$(x_{\max}',y_{\max}')$};

\node [below left]at(0,0) {$O$};
\node [below]at(1,0) {$1$};
\node [left]at(0,1) {$1$};

\node [right]at (1,0) {$x$};
\node [above]at (0,1) {$y$};

\end{scope}

\begin{scope}[shift={(1.3,0)}]

\draw (0,0)--(0,1);
\draw (0,0)--(2,0);
\draw (2,0)--(2,1);
\draw[dotted] (0,1)--(2,1);
\draw[ black, smooth, dashed] plot coordinates {
  (0,1) 
  (0.2,0.3) (0.5,0.8) (0.7,0.4) (1,0.6) (1.3,0.4) (1.5,0.8) (1.8,0.3) 
  (2,1)
};

\begin{scope}
\clip (0,0)--(0.1,0)--(0.1,1)--(0,1);
\draw[thick, black, smooth] plot coordinates {
  (0,1) 
  (0.2,0.3) (0.5,0.8) (0.7,0.4) (1,0.6) (1.3,0.4) (1.5,0.8) (1.8,0.3) 
  (2,1)
};
\end{scope}

\begin{scope}
\clip (2,0)--(1.9,0)--(1.9,1)--(2,1);
\draw[thick, black, smooth] plot coordinates {
  (0,1) 
  (0.2,0.3) (0.5,0.8) (0.7,0.4) (1,0.6) (1.3,0.4) (1.5,0.8) (1.8,0.3) 
  (2,1)
};
\end{scope}

\begin{scope}
\clip (1-0.1,0)--(1+0.1,0)--(1+0.1,1)--(1-0.1,1);
\draw[thick, black, smooth] plot coordinates {
  (0,1) 
  (0.2,0.3) (0.5,0.8) (0.7,0.4) (1,0.6) (1.3,0.4) (1.5,0.8) (1.8,0.3) 
  (2,1)
};
\end{scope}

\draw (1,0)--(1,1);

\draw [dotted](0.5,0.8)--(0.5,0);
\draw [dotted](1.5,0.8)--(1.5,0);

\fill (0.2,0.3) circle (0.5pt);
\fill (0.5,0.8) circle (0.5pt);
\fill (1,0.6) circle (0.5pt);
\fill (1.5,0.8) circle (0.5pt);
\fill (1.8,0.3) circle (0.5pt);
\fill (0.7,0.4) circle (0.5pt);
\fill (1.3,0.4) circle (0.5pt);

\node at (0.7,0.8) {$?$};
% \node at (1.25,0.5) {$G_q(x,\theta_a(x))$};
% \node at (1.6,0.9) {$G_q(x,\theta_a(x))$};
\node at (1.7,0.8) {$G_q\vert_{\Theta_a}$};

\node [below]at (0,0) {$(0,0)$};
\node [below]at (2,0) {$(1,1)$};
\node [below]at (1,0) {$y=1-x$};
\node [below]at (0.5,0) {$(x_{\max},y_{\max})$};
\node [below]at (1.5,0) {$(x_{\max}',y_{\max}')$};

\end{scope}
\end{tikzpicture}

\caption{The curve $\Theta_a$ passes through the hypothetical maxima of $G_q$ in $\Delta$ (left). The graph of $G_q$ evaluated along the curve $\Theta_a$ (right).}
\label{fig:two_max}
\end{figure}

Along the curve $\Theta_a$,
\[ G_q(x,\theta_a(x))
= (1-x+ax)^{-q/p}\Big( (1-x)^{2q/p} + \big(1+a^{1/p}\big)^q (1-x)^{q/p}x^{q/p} +a^{q/p} x^{2q/p} \Big) \]
and therefore 
\[ (1-x+ax)^{2q/p} \frac{\textup{d}}{\textup{d} x} G_q(x,\theta_a(x)) \]
under the parametrization $x=1/(u+1)$ becomes
\[ \frac{q}{p} (u+1)^{-3q/p+1} (u+a)^{q/p-1} \phi_0(u), \]
where $\phi_0\colon(0,\infty)\to\R$ is defined as
\begin{align*}
\phi_0(u)
&= -(1+a)u^{2q/p}-2a u^{2q/p-1}+\big(1+a^{1/p}\big)^{q}u^{q/p+1}\\
&\quad\, -a\big(1+a^{1/p}\big)^qu^{q/p-1}+2a^{q/p}u+(1+a)a^{q/p}.
\end{align*}

Observe the following chain of relations: 
\[ \phi_0''(u) = u^{q/p-3}\phi_1(u),\qquad 
\phi_1'(u) = u\phi_2(u). \]
Here, each function is explicitly given by
{\allowdisplaybreaks
\begin{align*}
p\phi_0'(u) 
& = - 2q(1+a)u^{2q/p-1} - 2(2q-p)a u^{2q/p-2} \\
&\quad\, + (q+p)\big(1+a^{1/p}\big)^q u^{q/p} 
- (q-p)a\big(1+a^{1/p}\big)^q u^{q/p-2}
+ 2pa^{q/p},\\
p^2\phi_1(u)
& =
- 2q(2q-p)(1+a)u^{q/p+1}
- 4(2q-p)(q-p)a u^{q/p}\\
&\quad\,+ q(q+p)\big(1+a^{1/p}\big)^q u^{2}
- (q-p) (q-2p) a\big(1+a^{1/p}\big)^q,\\
p^3\phi_2(u)
&= - 2q(2q-p)(q+p)(1+a) u^{q/p-1}
- 4q(2q-p)(q-p)a u^{q/p-2}\\
&\quad\,+ 2pq(q+p)\big(1+a^{1/p}\big)^q.
\end{align*}
}
Finally, we obtain
\[ p^4u^{3-q/p}\phi_2'(u) 
= - 2q(2q-p)(q+p)(q-p)(1+a)u -4(2q-p)(q-p)q(q-2p)a, \]
i.e.,
\begin{equation}\label{eq:phi2prime}
\frac{p^4u^{3-q/p}\phi_2'(u)}{2q(q-p)(2q-p)} = - (q+p) (1+a) u - 2 (q-2p) a.
\end{equation}

Recall that we have concluded that the function $G_q\vert_{\Theta_a}$ has at least seven stationary points on the curve $\Theta_a$, so $\phi_0$ has at least seven zeros in $(0,\infty)$.
By Rolle's theorem, $\phi'_0$ has at least six zeros, while $\phi''_0$, and thus also $\phi_1$, have at least five zeros in $(0,\infty)$. 
Next, $\phi'_1$, and thus also $\phi_2$, have at least four zeros in $(0,\infty)$.
By one last application of Rolle's theorem we deduce that $\phi'_2$ has at least three zeros, but this is not possible since the right hand side of \eqref{eq:phi2prime} is a first-degree polynomial.

We arrived at a contradiction with the assumption that $G_q$ attains a local maximum in $\Delta$.
We conclude that $G_q$ attains its maximum on $[0,1]^2$ either at the boundary or on the anti-diagonal $y=1-x$.
Lemma \ref{lm:boundary antidiagonal} then completes the proof of \eqref{eq:mainineqYoung}.

\begin{remark}\label{rem:thePDE}
In order for our method to work, it is crucial to find the ``correct'' curves for the dissection of $\Delta$. The authors were led to consider the candidate $y=\theta_a(x)$ upon observing the following PDE satisfied by $G_q$. 
If we denote 
{\allowdisplaybreaks
\begin{align*}
v(x,y) & := ((1-x)x, (1-y)y), \\
\tilde{a}(x,y) & := p^2x^{1/p}(1-y)^{1/p}, \\
\tilde{b}(x,y) & := p (1-x) x \Big( x^{1/p} (1-y)^{1/p} \big( p (1-2 x)+q (2x+2y-1) \big) \\
& \qquad\qquad\qquad - (1-x)^{1/p} y^{1/p} \big( p(1-2 x)+q (2x+2y-3) \big) \Big), \\
\tilde{c}(x,y) &: = q x^{1/p} (1-y)^{1/p} \Big( p \big((1-x)x+(1-y)y\big)+q \big((x+y)^2-x-3y\big) \Big),
\end{align*}
}
then
\begin{align*}
& \big(\tilde{a}(x,y)-\tilde{a}(y,x)\big) v(x,y) \nabla^2 G_q(x,y) v(x,y)^\textup{T}\\
& + \big(\tilde{b}(x,y),-\tilde{b}(y,x)\big)\cdot \nabla G_q(x,y) + \big(\tilde{c}(x,y)-\tilde{c}(y,x)\big) G_q(x,y)  = 0
\end{align*}
for $(x,y)\in(0,1)^2$. The directional Hessian term, $v(x,y) \nabla^2G_q(x,y) v(x,y)^\textup{T}$, represents the second derivative of $G_q$ along the vector field $v(x,y)$, which is associated with the ODE 
\[
\frac{\textup{d} y}{\textup{d} x}=\frac{(1-y)y}{(1-x)x}.
\]
The solutions to this ODE are, in fact, precisely the functions $\theta_a$, i.e., its integral curves are $\Theta_a$ from \eqref{e:dissecting curve}.
\end{remark}

\begin{remark}\label{rem:notamax}
If one wanted to characterize completely general Young's inequality \eqref{eq:toogeneralYoung} for exponents $1\leq p,q,r<\infty$, then one would like to know that the left hand side $\|f\ast g\|_{\ell^r}$ still attains maximum $1$ at
\[ (f,g)=(\mathbbm{1}_{\{0,1\}^d},\mathbbm{1}_{\{0,1\}^d}), \]
with the right hand side normalized by $\|f\|_{\ell^p} = \|g\|_{\ell^q} = 1$.
In one dimension this is the same as saying that $H_{p,q,r}\colon[0,1]^2\to(0,\infty)$ defined as
\[ H_{p,q,r}(x,y) := (1-x)^{r/p} (1-y)^{r/q} + \big( (1-x)^{1/p} y^{1/q} + x^{1/p} (1-y)^{1/q} \big)^{r} + x^{r/p} y^{r/q} \]
attains maximum $1$ at the central point $(x,y)=(1/2,1/2)$.
Let us explain that this is not always the case, even when $r=2$. Namely,
\[ H_{p,q,2}\Big(\frac{1}{2},\frac{1}{2}\Big) = \frac{6}{2^{2/p+2/q}}, \]
so $H_{p,q,2}(1/2,1/2)=1$ whenever the exponents $1\leq p,q<\infty$ satisfy
\begin{equation}\label{eq:pqlog6}
\frac{1}{p} + \frac{1}{q} = \frac{1}{2}\log_2 6 = 1.29248\ldots
\end{equation}
and one might expect the above to hold for all such choices of $p$ and $q$. 
However, despite the fact that $(1/2,1/2)$ is a stationary point of $H_{p,q,2}$,
\[ \nabla H_{p,q,2}\Big(\frac{1}{2},\frac{1}{2}\Big) = (0,0), \]
we also compute
\[ \nabla^2 H_{p,q,2}\Big(\frac{1}{2},\frac{1}{2}\Big) 
= 2^{4-2/p-2/q} \begin{pmatrix}
(4-3p)/p^2 & 0 \\
0 & (4-3q)/q^2
\end{pmatrix} \]
and notice that $\nabla^2 H_{p,q,2}(1/2,1/2)$ cannot be negative semi-definite, and $(1/2,1/2)$ cannot be a point of local maximum for $H_{p,q,2}$, as soon as $p<4/3$ or $q<4/3$.

Conversely, if $p,q\geq4/3$ satisfy \eqref{eq:pqlog6}, then it is possible to prove that \eqref{eq:toogeneralYoung} is actually true. Namely, in the endpoint case $p=4/3$ one can study the very concrete function $H_{4/3,4/(\log_2 9-1),2}$ using standard numerical tools (as in the proof of Lemma \ref{lm:cosh}), showing that it is at most $1$ on the whole square $[0,1]^2$. The other endpoint $q=4/3$ is analogous. In those endpoint cases one can then deduce \eqref{eq:toogeneralYoung} by the same induction on the dimension as in the previous section. Finally, Young's convolution inequality in the claimed range of exponents then follows by multilinear interpolation \cite[Chapter 3]{Thi06}. 
However, note that this still does not fully characterize all exponents $p,q$ for which \eqref{eq:toogeneralYoung} holds, even when we simply take $r=2$, but rather only those $p,q$ that additionally satisfy \eqref{eq:pqlog6}.
\end{remark}

%%%%%

\section{Proof of Corollary~\ref{cor:entropy2}}

If we substitute
\[ q = 1 + \varepsilon \]
for some $\varepsilon>0$, then the formula \eqref{eq:pqeqYoung} easily gives
\begin{align}
p & = \frac{2 + 2\varepsilon}{\log_2(2^{1+\varepsilon}+2)}
= \frac{1+\varepsilon}{1+\varepsilon/4+o(\varepsilon)} \nonumber \\
& = 1 + \frac{3}{4} \varepsilon + o(\varepsilon) \quad \text{as } \varepsilon\to0^+. \label{eq:asymofp2}
\end{align}
Just as in Section \ref{sec:entropy}, we easily obtain
\[ f(x)^p \stackrel{\eqref{eq:asymofp2}}{=} f(x) + \frac{3}{4} \varepsilon f(x) \ln f(x) + o(\varepsilon) \quad \text{as } \varepsilon\to0^+ \]
for each $x\in\Z^d$ such that $f(x)>0$, which leads to
\[ \|f\|_{\ell^p(\Z^d)} = 1 - \frac{3\ln 2}{4} \varepsilon\, \textup{H}_{\Z^d}(f) + o(\varepsilon) \quad \text{as } \varepsilon\to0^+. \]
The same formula holds for $g$, and we also have
\[ \|f\ast g\|_{\ell^q(\Z^d)} = 1 - (\ln 2) \varepsilon\, \textup{H}_{\Z^d}(f\ast g) + o(\varepsilon) \quad \text{as } \varepsilon\to0^+. \]
Therefore, from inequality \eqref{eq:improvedYoung} we obtain
\[ 1 - (\ln 2) \varepsilon\, \textup{H}_{\Z^d}(f\ast g) \leq 1 - \frac{3\ln 2}{4} \varepsilon\, \textup{H}_{\Z^d}(f) - \frac{3\ln 2}{4} \varepsilon\, \textup{H}_{\Z^d}(g) + o(\varepsilon), \]
so subtracting $1$, dividing by $\varepsilon\ln 2$, and letting $\varepsilon\to0^+$ we really deduce \eqref{eq:entropy34}, as desired.

Finally, if $f=g=2^{-d}\mathbbm{1}_{\{0,1\}^d}$, then
\[ (f\ast g)(x_1,x_2,\ldots,x_d) = 2^{|\{j\,:\,x_j=1\}|-2d} \]
for $(x_1,\ldots,x_d)\in\{0,1,2\}^d$ and $(f\ast g)(x_1,\ldots,x_d)=0$ otherwise.
For every integer $0\leq k\leq d$ we see that $f\ast g$ attains the value $2^{(d-k)-2d}$ for $\binom{d}{k}2^k$ tuples $(x_1,\ldots,x_d)$, so its entropy is
\begin{align*}
\textup{H}_{\Z^d}(f\ast g) & = - \sum_{k=0}^{d} \binom{d}{k}2^k 2^{-d-k} \log_2 2^{-d-k}
= 2^{-d} \sum_{k=0}^{d} \binom{d}{k} (d+k) \\
& = 2^{-d} d \sum_{k=0}^{d} \binom{d}{k} d + 2^{-d} \sum_{k=1}^{d} \frac{d}{k} \binom{d-1}{k-1} k
= 2^{-d} d \big( 2^d + 2^{d-1} \big) = \frac{3}{2} d.
\end{align*}
Since 
\[ \textup{H}_{\Z^d}(f) = \textup{H}_{\Z^d}(g) = - 2^d 2^{-d} \log_2 2^{-d} = d, \]
we see that equality holds in \eqref{eq:entropy34}.

%%%%%%%%%%%%%%%%%%%%%%%%%%%%%%%%%%%%%%%%%%%%%%%%

\section*{Acknowledgment}
This work was supported by the Croatian Science Foundation under the project number HRZZ-IP-2022-10-5116 (FANAP).

%%%%%%%%%%%%%%%%%%%%%%%%%%%%%%%%%%%%%%%%%%%%%%%%

\bibliography{dimension_free_binary}{}
\bibliographystyle{plainurl}

\end{document}